\newtheorem{theorem}{Theorem}[section]
\theoremstyle{plain}
\newtheorem{remark}{Remark}[section]
\numberwithin{equation}{section}
\begin{document}
\title[SEQUENCES ASSOCIATED TO ELLIPTIC CURVES]{SEQUENCES ASSOCIATED TO
ELLIPTIC CURVES}
\author{BET\"{U}L GEZER}
\address{Bursa Uludag University, Faculty of Science, Department of
Mathematics, G\"{o}r\"{u}kle, 16059, Bursa-TURKEY}
\email{betulgezer@uludag.edu.tr}
\date{25. 09. 2019.}
\subjclass[2010]{14H52, 11B37, 11G07.}
\keywords{Elliptic curves, rational points on elliptic curves, division
polynomials, elliptic divisibility sequences, squares, cubes. }
\maketitle

\begin{abstract}
Let $E$ be an elliptic curve defined over a field $K$ (with $char(K)\neq 2$)
given by a Weierstrass equation and let $P=(x,y)\in E(K)$ be a point. Then
for each $n$ $\geq 1$ and some $\gamma \in K^{\ast }$ we can write the $x$-
and $y$-coordinates of the point $[n]P$ as%
\begin{equation*}
\lbrack n]P=\left( \frac{\phi _{n}(P)}{\psi _{n}^{2}(P)},\frac{\omega _{n}(P)%
}{\psi _{n}^{3}(P)}\right) =\left( \frac{\gamma ^{2}G_{n}(P)}{F_{n}^{2}(P)},%
\frac{\gamma ^{3}H_{n}(P)}{F_{n}^{3}(P)}\right)
\end{equation*}%
where $\phi _{n},\psi _{n},\omega _{n}\in K[x,y]$, $\gcd (\phi _{n},\psi
_{n}^{2})=1$ and 
\begin{equation*}
F_{n}(P)=\gamma ^{1-n^{2}}\psi _{n}(P),G_{n}(P)=\gamma ^{-2n^{2}}\phi
_{n}(P),H_{n}(P)=\gamma ^{-3n^{2}}\omega _{n}(P)
\end{equation*}%
are suitably normalized division polynomials of $E$. In this work we show
the coefficients of the elliptic curve $E$ can be defined in terms of the
sequences of values $(G_{n}(P))_{n\geq 0}$ and $(H_{n}(P))_{n\geq 0}$ of the
suitably normalized division polynomials of $E$ evaluated at a point $P$ $%
\in E(K)$. Then we give the general terms of the sequences $%
(G_{n}(P))_{n\geq 0}$ and $(H_{n}(P))_{n\geq 0}$ associated to Tate normal
form of an elliptic curve. As an application of this we determine square and
cube terms in these sequences.
\end{abstract}

\section{Introduction}

Let $E$ denote an elliptic curve defined over a field $K$ given by a
Weierstrass equation%
\begin{equation}
E:y^{2}+a_{1}xy+a_{3}y=x^{3}+a_{2}x^{2}+a_{4}x+a_{6}\text{. }  \label{111}
\end{equation}%
For background on elliptic curves, see \cite{JS1} and \cite{JS2}. Let $E(K)$
be the group of $K$-rational points on $E$, let $\mathcal{O}$ denote the
point at infinity, the identity for the group $K$-rational points. Let $K(E)$
denote the function field of $E$ over $K$. Then $z=-x/y\in K(E)$ is a
uniformizer at $\mathcal{O}$ and the invariant differential $\omega =dx%
{\large /}(2y+a_{1}x+a_{3})$ has an expansion as a formal Laurent series in
a formal neighborhood of $\mathcal{O}$ such that%
\begin{equation*}
\omega (z)=(1+a_{1}z+(a_{1}^{2}+a_{2})z^{2}+{\tiny \cdot \cdot \cdot })~dz%
\text{.}
\end{equation*}%
This series has coefficients in $%
\mathbb{Z}
\lbrack a_{1},a_{2},a_{3},a_{4},a_{6}]$, and the uniformizer $z$ and the
differential $\omega $ at $\mathcal{O}$ satisfy $(\omega {\large /}dz)(%
\mathcal{O})=1$. Let $n\geq 1$ be an integer, and let $[n](z)\in $ $K$[[$z$%
]] be the power series defining the multiplication-by-$n$ map on the formal
group of $E$. The $n$-\textit{division polynomial }$F_{n}$ (normalized
relative to the uniformizer $z$)\textit{\ }is the unique function $F_{n}$ $%
\in K(E)$ with divisor $[n]^{-1}(\mathcal{O})-n^{2}(\mathcal{O})$ such that 
\begin{equation*}
\left( \frac{z^{n^{2}}F_{n}}{[n](z)}\right) (\mathcal{O})=1
\end{equation*}%
as defined in \cite[Definition 1]{JS3}, see also \cite{BM1} for details.

If $E$ is an elliptic curve over $%
\mathbb{C}
$, then $E$ has a complex uniformization $\Phi :%
\mathbb{C}
/L\rightarrow E(%
\mathbb{C}
)$, with a lattice $L$ $\subset $ $%
\mathbb{C}
$. The classical $n$-\textit{division polynomial} $\psi _{n}$ of an elliptic
curve $%
\mathbb{C}
/L$ can be expressed in terms of the Weierstrass $\sigma $-function:%
\begin{equation*}
\psi _{n}(z)=\psi _{n}(z,L)=\frac{\sigma (nz,L)}{\sigma (z,L)^{n^{2}}}~\ \ 
\text{for all }n\geq 1\text{,}
\end{equation*}%
where $\sigma (z,L)$ is the Weierstrass $\sigma $-function associated to the
lattice $L$. Moreover, the classical $n$-division polynomial $\psi _{n}$ for
the elliptic curve $E$ evaluated at point $P=(x,y)$ is defined using the
initial values%
\begin{eqnarray*}
\psi _{0}(P) &=&0, \\
\psi _{1}(P) &=&1, \\
\psi _{2}(P) &=&2y+a_{1}x+a_{3}\text{,} \\
\psi _{3}(P) &=&3x^{4}+b_{2}x^{3}+3b_{4}x^{2}+3b_{6}x+b_{8}\text{,} \\
\psi _{4}(P) &=&\psi
_{2}(P)(2x^{6}+b_{2}x^{5}+5b_{4}x^{4}+10b_{6}x^{3}+10b_{8}x^{2} \\
&&+(b_{2}b_{8}-b_{4}b_{6})x+(b_{4}b_{8}-b_{6}^{2}))\text{,}
\end{eqnarray*}%
where the point $P$ correspond to $z\in 
\mathbb{C}
/L$ and $b_{i}$ are the usual quantities \cite[Chapter III.1]{JS1}, and by
the formulas%
\begin{eqnarray*}
\psi _{2n+1}(P) &=&\psi _{n+2}(P)\psi _{n}(P)^{3}-\psi _{n-1}(P)\psi
_{n+1}(P)^{3}\text{, \ \ \ \ \ \ \ \ \ \ \ \ \ \ for }n\geq 2\text{,} \\
\psi _{2n}(P)\psi _{2}(P) &=&\psi _{n-1}(P)^{2}\psi _{n}(P)\psi
_{n+2}(P)-\psi _{n-2}(P)\psi _{n}(P)\psi _{n+1}(P)^{3}\text{, \ \ for }n\geq
3\text{.}
\end{eqnarray*}

Let $P$ $=(x,y)$ be a point of $E(K)$ (with $char(K)\neq 2$), and $n\geq 1$.
The coordinates of the point $[n]P$ can be expressed in terms of the point $%
P $, that is, for some $\gamma \in K$%
\begin{equation}
\lbrack n]P=\left( \frac{\phi _{n}(P)}{\psi _{n}(P)^{2}},\frac{\omega _{n}(P)%
}{\psi _{n}(P)^{3}}\right) =\left( \frac{\gamma ^{2}G_{n}(P)}{F_{n}(P)^{2}},%
\frac{\gamma ^{3}H_{n}(P)}{F_{n}(P)^{3}}\right)  \label{a}
\end{equation}%
where $\phi _{n}$, $\psi _{n}$, $\omega _{n}\in K[x$, $y]$, $\gcd (\phi _{n}$%
, $\psi _{n}^{2})=1$, and 
\begin{equation}
F_{n}(P)=\gamma ^{1-n^{2}}\psi _{n}(P),G_{n}(P)=\gamma ^{-2n^{2}}\phi
_{n}(P),H_{n}(P)=\gamma ^{-3n^{2}}\omega _{n}(P)  \label{b}
\end{equation}%
are suitably normalized division polynomials of $E$. Note that $F_{0}(P)=0$
and $F_{1}(P)=1$. Furthermore the polynomials $\phi _{n}(P)$ and $\omega
_{n}(P)$ are given by the recursion formulas 
\begin{eqnarray}
\phi _{0}(P) &=&1\text{, }\phi _{1}(P)=x\text{,} \\
\omega _{0}(P) &=&1\text{, }\omega _{1}(P)=y\text{,}  \notag
\end{eqnarray}%
and%
\begin{eqnarray}
\phi _{n}(P) &=&x\psi _{n}(P)^{2}-\psi _{n+1}(P)\psi _{n-1}(P)\text{,}
\label{2d} \\
\omega _{n}(P) &=&(\psi _{n-1}(P)^{2}\psi _{n+2}(P)-\psi _{n-2}(P)\psi
_{n+1}(P)^{2}  \notag \\
&&-\psi _{2}(P)\psi _{n}(P)(a_{1}\phi _{n}(P)+a_{3}\psi _{n}(P)))(2\psi
_{2}(P))^{-1}\text{.}  \notag
\end{eqnarray}%
for all $n\geq 2$. The normalized division polynomials $G_{n}(P)$ and $%
H_{n}(P)$ hold the following relations for some $\gamma \in K^{\ast }$%
\begin{eqnarray}
G_{0}(P) &=&1\text{, }G_{1}(P)={\small \ }\gamma ^{-2}x\text{,}  \label{21}
\\
H_{0}(P) &=&1\text{, }H_{1}(P)={\small \ }\gamma ^{-3}y\text{,}  \label{211}
\end{eqnarray}%
and%
\begin{eqnarray}
G_{n}(P) &=&{\small \ }x\gamma ^{-2}F_{n}(P)^{2}-F_{n+1}(P)F_{n-1}(P)\text{%
{\small ,}}  \label{2b} \\
H_{n}(P) &=&(F_{n-1}(P)^{2}F_{n+2}(P)-F_{n-2}(P)F_{n+1}(P)^{2}  \label{22} \\
&&-\gamma ^{-1}F_{2}(P)F_{n}(P)(a_{1}G_{n}(P)+\gamma
^{-2}a_{3}F_{n}(P)))(2F_{2}(P))^{-1}  \notag
\end{eqnarray}%
for all $n\geq 2$.

Division polynomials play crucial roles in the theory of elliptic functions,
in the theory of elliptic curves \cite{ReneS}, in the theory of elliptic
divisibility sequences, see \cite{MW1}, \cite{MW2}. Ayad \cite{MA} used
explicit addition formulas to prove the sequence of values $%
(F_{n}(P))_{n\geq 0}$ of the division polynomials of an elliptic curve $E$
at a point $P$ is purely periodic modulo prime powers. Cheon and Hahn \cite%
{CH} estimate valuations of division polynomials $F_{n}(P)$. Complete
formulas for explicit valuations of division polynomials at primes of good
or bad reduction are given in \cite{KS}. Silverman \cite{JS3} used
sophisticated methods to study the arithmetic properties of the sequence $%
(F_{n}(P))_{n\geq 0}$. Silverman \cite{JS3} also studied $p$-adic properties
of the sequence $(F_{n}(P))_{n\geq 0}$, and proved the existence and
algebraicity of the $p$-adic limit of certain subsequences of the sequence $%
(F_{n}(P))_{n\geq 0}$. More precisely, Silverman proved if the elliptic
curve $E$ has good reduction, then there is a power $q=p^{e}$ such that for
every $m\geq 1$, the limit%
\begin{equation*}
\lim_{i\rightarrow \infty }F_{mq^{i}}(P)\ \ \text{converges in }%
\mathbb{Z}
_{p}\text{ and is algebraic over }%
\mathbb{Q}
\text{.}
\end{equation*}

The sequences $(G_{n}(P))_{n\geq 0}$ and $(H_{n}(P))_{n\geq 0}$ that are
generated by the numerators of the $x$- and $y$-coordinates of the multiples
of a point $P$ on an elliptic curve $E$ defined over a field $K$ are also
interesting and have properties similar to the sequence $(F_{n}(P))_{n\geq
0} $. In \cite{BO3}, the author and Bizim study periodicity properties and $%
p $-adic properties of the sequences $(G_{n}(P))_{n\geq 0}$ and $%
(H_{n}(P))_{n\geq 0}$. The authors show that the sequences $%
(G_{n}(P))_{n\geq 0}$ and $(H_{n}(P))_{n\geq 0}$ are periodic when $K$ is a
finite field. Moreover, we prove that certain subsequences of these
sequences are converge in $%
\mathbb{Z}
_{p}$ and the limits are algebraic over $%
\mathbb{Q}
$.

In this paper we continue to study the properties of the sequences $%
(G_{n}(P))_{n\geq 0}$ and $(H_{n}(P))_{n\geq 0}$ of values of the suitably
normalized division polynomials of $E$ evaluated at a point $P$ $\in E(K)$.
Let $L$ be a lattice in $%
\mathbb{C}
$, and let $E$ be an elliptic curve defined over $\mathbb{C}$ given by
equation 
\begin{equation*}
E:y^{2}=x^{3}-\frac{1}{4}g_{2}(L)x-\frac{1}{4}g_{3}(L).
\end{equation*}%
Ward \cite[equations 13.6, 13.7]{MW1}, proved that the modular invariants $%
g_{2}(L)$ and $g_{3}(L)$ associated to the lattice $L$ and the Weierstrass
values $\wp (z,L)$ and $\wp ^{\prime }(z,L)$ associated to the point $z$ on
the elliptic curve $%
\mathbb{C}
/L$ are rational functions of $F_{2}$, $F_{3}$, and $F_{4}$, with $%
F_{2}F_{3}\neq 0$, see also \cite[Appendix]{JS4}. Our first main theorem
shows that $g_{2}(L)$, $g_{3}(L)$, $\wp (z,L)$ and $\wp ^{\prime }(z,L)$ are
all defined in the same field as the terms of the sequences $%
(G_{n}(P))_{n\geq 0}$ and $(H_{n}(P))_{n\geq 0}$ similar to that of the
sequence $(F_{n}(P))_{n\geq 0}$. The proof of the theorem uses properties of
elliptic functions.

\begin{theorem}
Let $L$ be a lattice in $%
\mathbb{C}
$, let $E$ be an elliptic curve defined over $\mathbb{C}$ given by equation 
\begin{equation*}
E:y^{2}=x^{3}-\frac{1}{4}g_{2}(L)x-\frac{1}{4}g_{3}(L)
\end{equation*}%
and let $P\in E(\mathbb{C})$. Let $(G_{n}(P))_{n\geq 0}$ and $%
(H_{n}(P))_{n\geq 0}$ be the sequences generated by the numerators of the $x$%
- and $y$-coordinates of the multiples of $P$ as in (\ref{a}), respectively.
Then the modular invariants $g_{2}(L)$ and $g_{3}(L)$ associated to the
lattice $L$ and the Weierstrass values $\wp (z,L)$ and $\wp ^{\prime }(z,L)$
associated to the point $z$ on the elliptic curve $%
\mathbb{C}
/L$ are in the field $\mathbb{Q(}G_{1}$, $G_{2}$, $H_{1}$, $H_{2})$.
\end{theorem}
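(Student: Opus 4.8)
The plan is to reduce everything to the known result of Ward, which states that $g_2(L)$, $g_3(L)$, $\wp(z,L)$ and $\wp'(z,L)$ are rational functions of $F_2$, $F_3$, $F_4$ with $F_2 F_3 \neq 0$. So it suffices to show that each of $F_2(P)$, $F_3(P)$, $F_4(P)$ lies in $\mathbb{Q}(G_1, G_2, H_1, H_2)$. Since our curve is in short Weierstrass form $y^2 = x^3 - \tfrac14 g_2 x - \tfrac14 g_3$, we have $a_1 = a_3 = 0$, so the normalization formulas (\ref{2b}) and (\ref{22}) simplify considerably: in particular $F_2(P) = \gamma^{-3}\psi_2(P) = \gamma^{-3}(2y) = 2 H_1(P)$ (using $H_1 = \gamma^{-3}y$), which already expresses $F_2$ in the desired field, and the hypothesis $F_2 F_3 \neq 0$ guarantees we may divide by it.

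First I would write out $G_1 = \gamma^{-2}x$, $H_1 = \gamma^{-3}y$, $G_2 = \gamma^{-8}\phi_2(P)$, $H_2 = \gamma^{-12}\omega_2(P)$ explicitly in terms of $x$, $y$, $g_2$, $g_3$ using the initial recursion data. From $\phi_2(P) = x\psi_2^2 - \psi_3\psi_1 = 4xy^2 - \psi_3(P)$ and the formula $\psi_3(P) = 3x^4 + 6a_4 x^2 + 12 a_6 x - a_4^2$ (here $b_2 = 0$, $b_4 = 2a_4 = -\tfrac12 g_2$, $b_6 = 4a_6 = -g_3$, $b_8 = -a_4^2 = -\tfrac{1}{16}g_2^2$), one obtains $G_2$ as a polynomial in $G_1$ and $\gamma^{-4}g_2$, $\gamma^{-6}g_3$; similarly $H_2$ involves $H_1$, $G_1$ and the same normalized invariants. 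The key algebraic step is then to solve this system: the quantities $u := \gamma^{-4}g_2$ and $v := \gamma^{-6}g_3$ appear linearly (after using the curve equation $H_1^2 = G_1^3 - \tfrac14 u G_1 - \tfrac14 v$ to eliminate one of them), so $u$ and $v$ can be expressed as elements of $\mathbb{Q}(G_1, G_2, H_1, H_2)$. Having $u$, $v$, $G_1 = \gamma^{-2}x$ and the curve relation in hand, I would then recover $F_3(P) = \gamma^{-8}\psi_3(P)$ and $F_4(P) = \gamma^{-15}\psi_4(P)$ by feeding the normalized invariants into the (normalized) division-polynomial formulas, yielding $F_3, F_4 \in \mathbb{Q}(G_1, G_2, H_1, H_2)$ as well.

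Finally, I would invoke Ward's formulas: $g_2(L)$ and $g_3(L)$ are rational functions of $F_2, F_3, F_4$, hence lie in $\mathbb{Q}(G_1, G_2, H_1, H_2)$; and $\wp(z,L)$, $\wp'(z,L)$ — which are essentially $x$ and $2y$ up to the change of variables relating the Weierstrass model to Ward's — are likewise rational functions of $F_2, F_3, F_4$, so they too lie in this field. The main obstacle I anticipate is purely bookkeeping: one must check that the determinant of the linear system for $(u,v)$ is not identically zero (so that the inversion is legitimate), and one must carry the factors of $\gamma$ correctly through the normalization so that every appearance of $\gamma$ cancels, leaving genuinely rational expressions in the four generators. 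The hypothesis $F_2 F_3 \neq 0$ — equivalently $\psi_2\psi_3 \neq 0$, i.e. $P$ is not a $2$- or $3$-torsion point — is exactly what is needed to make all the denominators nonzero; degenerate cases where $G_2$ or $H_2$ fail to determine $u,v$ should be traced back to this same condition.
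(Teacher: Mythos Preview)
Your overall plan is sound and close to the paper's: both arguments pass through $F_2, F_3, F_4$. But there is a concrete error in your execution and an unnecessary detour.

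The error: $u = \gamma^{-4}g_2$ and $v = \gamma^{-6}g_3$ do \emph{not} appear linearly. The term $-a_4^2$ in your formula for $\psi_3$ equals $-g_2^2/16$, so $\gamma^{-8}\psi_3$ is already quadratic in $u$; after eliminating $v$ via the curve equation the $G_2$-relation becomes
\[
\tfrac{1}{16}u^2 - \tfrac{3}{2}G_1^2\,u + (9G_1^4 - 8G_1H_1^2 - G_2) = 0,
\]
and the $H_2$-relation is cubic in $u$ and quadratic in $v$. So the inversion you propose does not go through as a linear system, and you would need a separate argument to show that the relevant common root actually lies in $\mathbb{Q}(G_1, G_2, H_1, H_2)$ rather than a quadratic extension.

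The detour: even if you did solve for $u, v$, then reconstructing $F_3, F_4$ and invoking Ward to recover $g_2, g_3$ is circular --- once you have $u = \gamma^{-4}g_2$ and $v = \gamma^{-6}g_3$ in the field you are already done.

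The paper sidesteps both issues by reading $F_3$ and $F_4$ directly off the normalized recursions, without ever introducing $u,v$. With $a_1 = a_3 = 0$, relation (\ref{2b}) at $n=2$ says $G_2 = G_1 F_2^2 - F_3$, hence $F_3 = 4G_1H_1^2 - G_2$; and (\ref{22}) at $n=2$ gives $H_2 = F_4/(2F_2)$, hence $F_4 = 4H_1H_2$. No system to solve. From there the paper does not cite Ward but uses the elliptic-function identity $\wp'' = (\psi_2^5 + \psi_4)/(\psi_2\psi_3)$ together with $g_2 = 12\wp^2 - 2\wp''$ and $g_3 = 2\wp(\wp'' - 4\wp^2) - (\wp')^2$ to write $g_2, g_3$ (up to powers of $\gamma$) as explicit rational functions of $G_1, G_2, H_1, H_2$, with common denominator $4G_1H_1^2 - G_2 = F_3$; the standing hypothesis $F_2F_3 \neq 0$ is exactly what makes this denominator nonzero.
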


Section 2 provides background on elliptic divisibility sequences and
elliptic curves. In Section 3, we give a representation of the sequences $%
(G_{n}(P))_{n\geq 0}$ and $(H_{n}(P))_{n\geq 0}$ by means of the elliptic
functions and give the proof of Theorem 1.1. In Section 4 and Appendix A, we
consider the sequences $(G_{n}(P))_{n\geq 0}$ and $(H_{n}(P))_{n\geq 0}$
associated to elliptic curves with a torsion point of order $N$. Ward \cite[%
Theorem 23.1]{MW1} studied the case $N=2$ for elliptic divisibility
sequences. It is a classical result that all elliptic curves with a torsion
point of order $N$ lie in a one parameter family where $N\in \{4,...,10,12\}$%
. In \cite[Theorem 3.2]{BG}, we use Tate normal form of an elliptic curve to
give a complete description of elliptic divisibility sequences arising from
a point of order $N$. In Theorem 4.3, and Appendix A, we give a complete
description of sequences $(G_{n}(P))_{n\geq 0}$ and $(H_{n}(P))_{n\geq 0}$
arising from points of order $N$. We will also use Tate normal form of an
elliptic curve to give the sequences $(G_{n}(P))_{n\geq 0}$ and $%
(H_{n}(P))_{n\geq 0}$ arising from points of order $N$. As an application,
in Theorem 5.1 and Appendix B, we determine square and cube terms in the
sequences $(G_{n}(P))_{n\geq 0}$ and $(H_{n}(P))_{n\geq 0}$ associated to a
Tate normal form.

\textbf{\ Acknowledgements}. This work was supported by the research fund of
Bursa Uluda\u{g} University project no: KUAP(F)-2017/3.

\section{Elliptic Divisibility Sequences}

An \textit{elliptic divisibility sequence} (EDS) is a sequence $%
(h_{n})_{n\geq 0}$ of integers satisfying a recurrence relation of the form%
\begin{equation*}
h_{m+n}h_{m-n}=h_{m+1}h_{m-1}h_{n~}^{2}-h_{n+1}h_{n-1}h_{m~}^{2}
\end{equation*}%
and the divisibility property%
\begin{equation*}
h_{n}|h_{m}\text{ whenever }n|m
\end{equation*}%
for all $m\geq n\geq 1$. An elliptic divisibility sequence is called \textit{%
proper} if $h_{0}=0$, $h_{1}=1$, and $h_{2}h_{3}\neq 0$. The \textit{%
discriminant} of an EDS $(h_{n})_{n\geq 0}$ is the quantity%
\begin{eqnarray*}
\Delta (h_{n})
&=&h_{4}h_{2}^{15}-h_{3}^{3}h_{2}^{12}+3h_{4}^{2}h_{2}^{10}-20h_{4}h_{3}^{3}h_{2}^{7}+3h_{4}^{3}h_{2}^{5}
\\
&&\text{ \ \ \ \ \ \ \ }%
+16h_{3}^{6}h_{2}^{4}+8h_{4}^{2}h_{3}^{3}h_{2}^{2}+h_{4}^{4}\text{,}
\end{eqnarray*}%
(this is the formula in \cite{JS3} or \cite{JS4}, see also \cite{MW1}). A
proper EDS is called \textit{nonsingular} if $\Delta (h_{n})\neq 0$. The
arithmetic properties of EDSs were first studied by Morgan Ward in 1948 \cite%
{MW1, MW2}. For more details on EDSs, see also \cite{GE1, RS, CS}.

Ward defined the division polynomials over the field $%
\mathbb{C}
$ and using the complex analytic theory of elliptic functions showed that
nonsingular elliptic divisibility sequences can be expressed in terms of
elliptic functions. More precisely, Ward \cite[Theorem 12.1]{MW1} proved
that if $(h_{n})_{n\geq 0}$ is a nonsingular elliptic divisibility sequence,
then there exist a lattice $L\subset 
\mathbb{C}
$ and a complex number $z$ $\in 
\mathbb{C}
$ such that%
\begin{equation}
h_{n}=\psi _{n}(z,L)=\frac{\sigma (nz\text{, }L)}{\sigma (z\text{, }%
L)^{n^{2}}}\text{ for all }n\geq 1\text{,}  \label{aa1}
\end{equation}%
where $\psi _{n}(z$, $L)$ and $\sigma (z$, $L)$ are the $n$-division
polynomial and the Weierstrass $\sigma $-function associated to the lattice $%
L$, respectively. Further, Ward showed the modular invariants $g_{2}(L)$ and 
$g_{3}(L)$ associated to the lattice $L$ and the Weierstrass values $\wp (z)$
and $\wp ^{\prime }(z)$ associated to the point $z$ on the elliptic curve $%
\mathbb{C}
/L$ can be given by the terms $h_{2}$, $h_{3}$ and $h_{4}$ of the sequence $%
(h_{n})$, see \cite[equations 13.6, 13.7, 13.5 and 13.1]{MW1}. Silverman 
\cite[Proposition 18]{JS3} reformulated Ward's result and showed that if $%
(h_{n})_{n\geq 0}$ is a nonsingular EDS associated to an elliptic curve $E$
given by a minimal Weierstrass equation over $%
\mathbb{Q}
$ and a point $P\in E(%
\mathbb{Q}
)$, then there is a constant $\gamma \in 
\mathbb{Q}
^{\ast }$ such that 
\begin{equation}
h_{n}=\gamma ^{n^{2}-1}F_{n}(P)\text{ \ \ \ for all }n\geq 1  \label{a1}
\end{equation}%
where $F_{n}$ is the normalized $n$-division polynomial on $E$.

\section{The Representation of the Sequences $(G_{n}(P))_{n\geq 0}$ and $%
(H_{n}(P))_{n\geq 0}$ by Elliptic Functions}

Let $E$ be an elliptic curve defined over a field $K$ with Weierstrass
equation%
\begin{equation}
E:y^{2}=x^{3}+ax+b\text{. }
\end{equation}%
It is clear that the coefficients of the elliptic curve $E$ can be defined
in terms of the sequence $(F_{n}(P))_{n\geq 0}$ of values of the division
polynomials of $E$ at a point $P$ by using the relation (\ref{a1}) and
Ward's formulas for\ the modular invariants $g_{2}(L)$ and $g_{3}(L)$ \cite[%
equations 13.6, 13.7]{MW1}; see also \cite[Appendix]{JS4}. In this section
we give a representation of the sequences $(G_{n}(P))_{n\geq 0}$ and $%
(H_{n}(P))_{n\geq 0}$ of values of the suitably normalized division
polynomials of $E$ evaluated at a point $P$ $\in E(K)$ by means of the
elliptic functions and prove the coefficients of the elliptic curve $E$ can
be defined in terms of these sequences. In this section we will also assume $%
\psi _{2}(P)\psi _{3}(P)\neq 0$ so that $F_{2}(P)F_{3}(P)\neq 0$.

We first state some results from elliptic function theory that will be
needed. Let $L$ be a lattice in $%
\mathbb{C}
$. Recall from elliptic function theory that the Weierstrass $\wp $-function
associated to the lattice $L$ and its derivative $\wp ^{\prime }$ satisfy 
\begin{equation}
\wp ^{\prime }(z)^{2}=4\wp (z)^{3}-g_{2}(L)\wp (z)-g_{3}(L)\text{ }
\label{1}
\end{equation}%
where $g_{2}(L)$ and $g_{3}(L)$ are modular invariants associated to the
lattice $L$. If we take the derivative of the both sides of (\ref{1}) we
have the following relation%
\begin{equation}
\wp ^{\prime \prime }\newline
(z)=6\wp (z)^{2}-\frac{1}{2}g_{2}(L)\text{. }  \label{3}
\end{equation}%
Then we obtain%
\begin{equation}
g_{2}(L)=12\wp (z)^{2}-2\wp ^{\prime \prime }\newline
(z)\text{. }  \label{3a}
\end{equation}%
by (\ref{3}) and so%
\begin{equation}
g_{3}(L)=2\wp (z)[\wp ^{\prime \prime }\newline
(z)-4\wp (z)^{2}]-\wp ^{\prime }(z)^{2}  \label{3b}
\end{equation}%
by (\ref{1}) and (\ref{3}). Moreover recall that%
\begin{equation}
\psi _{2}(z)=-\wp ^{\prime }(z)\text{,}  \label{2a}
\end{equation}%
\begin{equation}
\psi _{3}(z)=3\wp (z)^{4}-\frac{3}{2}g_{2}(L)\wp (z)^{2}-3g_{3}(L)\wp (z)-%
\frac{1}{16}g_{2}(L)^{2}\text{,}
\end{equation}%
and%
\begin{equation}
\wp (2z)-\wp (z)=\frac{1}{4}\left( \frac{\wp ^{\prime \prime }\newline
(z)}{\wp ^{\prime }(z)}\right) ^{2}-3\wp (z)\text{,}
\end{equation}%
\begin{equation}
\wp (3z)-\wp (z)=\frac{\wp ^{\prime }(z)^{2}[\wp ^{\prime }(z)^{4}-\psi
_{3}(z)\wp ^{\prime \prime }\newline
(z)]}{\psi _{3}(z)^{2}}\text{.}  \label{4}
\end{equation}%
Furthermore, one can derive a formula for $\wp (nz)$ in terms of $\wp (z)$, $%
\psi _{n}(z)$ and $\psi _{n\pm 1}(z)$, more explicitly the following
relation holds%
\begin{equation}
\wp (nz)=\wp (z)-\frac{\psi _{n+1}(z)\psi _{n-1}(z)}{\psi _{n}(z)^{2}}
\label{4a}
\end{equation}%
for all $n$ $\geq 2$. Thus substituting $n=2$ and $n=3$ into (\ref{4a}) we
have 
\begin{equation}
\wp (2z)-\wp (z)=-\frac{\psi _{3}(z)}{\psi _{2}(z)^{2}}\text{ \ \ \ }
\label{4b}
\end{equation}%
since $\psi _{1}(z)=1$, and 
\begin{equation}
\wp (3z)-\wp (z)=-\frac{\psi _{4}(z)\psi _{2}(z)}{\psi _{3}(z)^{2}}\text{.}
\label{4c}
\end{equation}%
Now by (\ref{4}), (\ref{2a}) and (\ref{4c}) we have 
\begin{equation*}
-\frac{\psi _{4}(z)\psi _{2}(z)}{\psi _{3}(z)^{2}}=\frac{\psi
_{2}(z)^{2}[\psi _{2}(z)^{4}-\psi _{3}(z)\wp ^{\prime \prime }\newline
(z)]}{\psi _{3}(z)^{2}}
\end{equation*}%
and so%
\begin{equation}
\wp ^{\prime \prime }(z)=\frac{\psi _{2}(z)^{5}+\psi _{4}(z)}{\psi
_{2}(z)\psi _{3}(z)}\text{.}  \label{4d}
\end{equation}%
Let $E$ be an elliptic curve over $\mathbb{C}$. Then the points $\left( \wp
(z),\wp ^{\prime }(z)\right) $ lie on the elliptic curve 
\begin{equation}
y^{2}=4x^{3}-g_{2}(L)x-g_{3}(L)  \label{1a}
\end{equation}%
by (\ref{1}). Now let $(F_{n}(P))_{n\geq 0}$, $(G_{n}(P))_{n\geq 0}$ and $%
(H_{n}(P))_{n\geq 0}$ be the sequences of values of the normalized division
polynomials of $E$ at a point $P$. Then by second part of (\ref{21}) we have%
\begin{equation}
\wp (z)=\gamma ^{2}G_{1}(P)\text{.}  \label{c}
\end{equation}%
On the other hand by (\ref{2b}),%
\begin{equation}
G_{2}(P)=x\gamma ^{-2}F_{2}(P)^{2}-F_{3}(P)  \label{c0}
\end{equation}%
since $F_{1}(P)=1$. By first part of (\ref{b}), and (\ref{4b}) we obtain%
\begin{equation}
\wp (2z)=\frac{\wp (z)F_{2}(P)^{2}-\gamma ^{2}F_{3}(P)}{F_{2}(P)^{2}}\text{.}
\label{c2}
\end{equation}%
Hence by second part of (\ref{21}), (\ref{c}) and (\ref{c0}) we have 
\begin{equation*}
\wp (2z)=\frac{\gamma ^{2}G_{2}(P)}{F_{2}(P)^{2}}\text{.}
\end{equation*}%
Thus one can easily derive inductively that%
\begin{equation}
\wp (nz)=\frac{\gamma ^{2}G_{n}(P)}{F_{n}(P)^{2}}  \label{2}
\end{equation}%
for all $n\geq 1$.

We are now ready to prove our first main result. From now on, for simplicity
of notation, we write $G_{n}$ and $H_{n}$ for $G_{n}(P)$ and $H_{n}(P)$,
respectively, unless otherwise specified.

\begin{proof}[Proof of Theorem 1.1]
By the first part of\textit{\ }(\ref{b}) we have 
\begin{equation}
\psi _{2}(z)=\gamma ^{3}F_{2}  \label{d0}
\end{equation}%
and 
\begin{equation}
\psi _{3}(z)=\gamma ^{8}F_{3}\text{.}  \label{d1}
\end{equation}%
Thus by (\ref{d0}) and (\ref{2a}) we obtain 
\begin{equation}
\wp ^{\prime }(z)=-\gamma ^{3}F_{2}\text{.}  \label{c1}
\end{equation}%
On the other hand (\ref{2}) implies that 
\begin{equation}
\wp (3z)=\gamma ^{2}G_{3}/F_{3}^{2}\text{.}  \label{d}
\end{equation}%
Now (\ref{4d}) and the first part of (\ref{b}) imply that%
\begin{equation}
\wp ^{\prime \prime }(z)=\frac{\gamma ^{4}(F_{2}^{5}+F_{4})}{F_{2}F_{3}}%
\text{.}  \label{d2}
\end{equation}%
On the other hand by (\ref{d0}) we have%
\begin{equation*}
F_{2}=2\gamma ^{-3}y
\end{equation*}%
since $\psi _{2}=2y$, for the elliptic curve $E:y^{2}=x^{3}-\frac{1}{4}%
g_{2}(L)x-\frac{1}{4}g_{3}(L)$. Thus by the second part of (\ref{211}) we
derive that 
\begin{equation}
F_{2}=2H_{1}\text{.}  \label{f0}
\end{equation}%
Now by putting $n=2$ into (\ref{22}) and then using (\ref{f0}) we obtain 
\begin{equation}
F_{4}=4H_{1}H_{2}  \label{f2}
\end{equation}%
since $F_{0}=0$ and $F_{1}=1$. Thus 
\begin{equation}
\wp ^{\prime }(z)=-2\gamma ^{3}H_{1}\text{,}  \label{f}
\end{equation}%
by (\ref{c1}) and (\ref{f0}). Now by setting $n=2$ in (\ref{2b}) and then
using (\ref{c1}) and the second part of (\ref{21}) we have 
\begin{equation}
F_{3}=\gamma ^{-6}\wp ^{\prime }(z)^{2}G_{1}-G_{2}  \label{e2}
\end{equation}%
since $F_{1}=1$. Thus 
\begin{equation}
F_{3}=4G_{1}H_{1}^{2}-G_{2}  \label{e3}
\end{equation}%
by (\ref{f}). Therefore by (\ref{d2}), (\ref{f0}), (\ref{f2}) and (\ref{e3})
we have%
\begin{equation}
\wp ^{\prime \prime }(z)=\frac{2\gamma ^{4}(8H_{1}^{4}+H_{2})}{%
4G_{1}H_{1}^{2}-G_{2}}\text{.}  \label{e1}
\end{equation}

On combining (\ref{3a}) with (\ref{c}) and (\ref{e1}) we obtain the
following formula for $g_{2}(L)$, 
\begin{equation}
g_{2}(L)=\frac{4\gamma
^{4}(12G_{1}^{3}H_{1}^{2}-3G_{1}^{2}G_{2}-8H_{1}^{4}-H_{2})}{%
4G_{1}H_{1}^{2}-G_{2}}\text{.}  \label{g}
\end{equation}%
Similarly combining (\ref{3b}) with (\ref{c}), (\ref{f}) and (\ref{e1}) we
have%
\begin{equation}
g_{3}(L)=\frac{4\gamma
^{6}(4G_{1}H_{1}^{4}+G_{1}H_{2}-8G_{1}^{4}H_{1}^{2}+2G_{1}^{3}G_{2}+H_{1}^{2}G_{2})%
}{4G_{1}H_{1}^{2}-G_{2}}\text{. }  \label{h1}
\end{equation}%
Now if $E$ is an elliptic curve over $\mathbb{Q}$ given by a Weierstrass
equation%
\begin{equation*}
E:y^{2}=x^{3}+ax+b\text{,}
\end{equation*}%
then (\ref{1a}) imply that%
\begin{equation*}
a=-\frac{g_{2}(L)}{4}\text{ and }b=-\frac{g_{3}(L)}{4}
\end{equation*}%
where $g_{2}(L)$ and $g_{3}(L)$ are the rational expressions in $G_{1}$, $%
G_{2}$, $H_{1}$ and $H_{2}$ by relations (\ref{g}) and (\ref{h1})
respectively. Finally rational expressions for $\wp (z,L)$ and $\wp ^{\prime
}(z,L)$ are given by equations (\ref{c}) and (\ref{f}), which completes the
proof of the theorem.
\end{proof}

\section{The Sequences $(G_{n})_{n\geq 0}$ and $(H_{n})_{n\geq 0}$
Associated to Tate Normal Forms}

The study of the group $E(%
\mathbb{Q}
)$ has been playing important roles in number theory. The modern number
theory originated in 1922 when L. J. Mordell proved that the group of
rational points $E(%
\mathbb{Q}
)$ is a finitely generated abelian group. This result was generalized in
1928 to abelian varieties over number fields by A. Weil. Moreover, the
characterization of torsion subgroups of $E(%
\mathbb{Q}
)$ is always interesting. A uniform bound was studied for the order of the
torsion subgroup $E_{tors}(%
\mathbb{Q}
{\mathbb{)}}$ of $E(%
\mathbb{Q}
)$ by Shimura, Ogg, and others. The following result conjectured by Ogg, was
proved by B. Mazur.

\begin{theorem}[\protect\cite{BM}]
Let $E$ be an elliptic curve defined over $%
\mathbb{Q}
$. Then the torsion subgroup $E_{tors}(%
\mathbb{Q}
{\mathbb{)}}$ is either isomorphic to $%
\mathbb{Z}
/N%
\mathbb{Z}
$ for $N=1,2,...,10,12$ or to $%
\mathbb{Z}
/2%
\mathbb{Z}
\times 
\mathbb{Z}
/2N%
\mathbb{Z}
$ for $N=1,2,3,4$. Further, each of these groups does occur as an $E_{tors}(%
\mathbb{Q}
{\mathbb{)}}$.
\end{theorem}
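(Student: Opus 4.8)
The plan is to convert the statement into assertions about $\mathbb{Q}$-rational points on modular curves and then to verify each. An elliptic curve $E/\mathbb{Q}$ together with a rational point of exact order $N$ is exactly the data of a non-cuspidal $\mathbb{Q}$-point of the affine modular curve $Y_1(N)$, and the groups $\mathbb{Z}/2\mathbb{Z}\times\mathbb{Z}/2N\mathbb{Z}$ correspond to rational points of a closely related modular curve covering $Y_1(2N)$. So the theorem splits into: (a) for each $N\notin\{1,\dots,10,12\}$ the curve $X_1(N)$ has no non-cuspidal rational point (and likewise for the $\mathbb{Z}/2\mathbb{Z}\times\mathbb{Z}/2N\mathbb{Z}$ curves with $N\ge 5$); and (b) for each $N$ in the admissible list a non-cuspidal rational point actually exists. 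A preliminary observation reduces (a) to \emph{finitely many} $N$: the $p$-primary part of $E_{tors}(\mathbb{Q})$ is cyclic for odd $p$ and of the form $\mathbb{Z}/2^a\mathbb{Z}\times\mathbb{Z}/2^b\mathbb{Z}$ for $p=2$, so once one knows there is no rational $p$-torsion for primes $p\ge 11$ and no rational point of order $16$, $27$, $25$ or $49$, every admissible $N$ divides $2^3\cdot 3^2\cdot 5\cdot 7$, leaving only a fixed finite set of $X_1(N)$ (and companion curves) to examine.

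Direction (b), together with the genus-zero part of (a) — namely $N\le 10$ and $N=12$, for which $X_1(N)\cong\mathbb{P}^1$ — is handled concretely and is essentially the content already used in this paper: put $E$ in Tate normal form $y^2+(1-c)xy-by=x^3-bx^2$ with the torsion point at $(0,0)$; for each admissible $N$ the requirement that $(0,0)$ have order $N$ defines a rational curve in the $(b,c)$-plane carrying infinitely many rational points, and adjoining the full $2$-torsion produces the groups $\mathbb{Z}/2\mathbb{Z}\times\mathbb{Z}/2N\mathbb{Z}$ for $N\le 4$. This is explicit and elementary.

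The real work is direction (a) for the $N$ with $X_1(N)$ of positive genus. The finitely many curves of genus one or two — $X_1(N)$ for $N=11,14,15$ (genus one) and $N=13,16,18$ (genus two), together with the small companion curves — are dispatched by explicitly determining the Mordell--Weil group over $\mathbb{Q}$ of the curve, respectively of its Jacobian: in the genus-one cases one exhibits a Weierstrass model of rank zero and reads off its finite torsion, checking that every rational point is a cusp; in the genus-two cases one uses a descent together with a Chabauty-type argument on the Jacobian. The deep case is the remaining infinite family, above all the primes $p\ge 11$, which is Mazur's Eisenstein-ideal method. One works with the Jacobian $J$ of $X_1(p)$ — in practice through $X_0(p)$, onto which $X_1(p)$ maps — and passes to the \emph{Eisenstein quotient}, the largest quotient abelian variety of $J$ killed by the Eisenstein ideal $\mathfrak{I}\subset\mathbb{T}$ of the Hecke algebra. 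The two key facts to establish are that this quotient has \emph{finite} Mordell--Weil group over $\mathbb{Q}$, proved by a descent whose inputs are the annihilation by $\mathfrak{I}$, the computation of the index of $\mathfrak{I}$ in $\mathbb{T}$, and a precise description of the Galois module $J[\mathfrak{I}]$ and of the cuspidal subgroup; and that the class of a difference of cusps generates this finite group (a cyclic group whose order is the numerator of $(p-1)/12$). Given this, a reduction-modulo-$\ell$ argument for a suitably chosen small prime $\ell$ shows that any rational point of $X_1(p)$ maps into the already-enumerated finite group, which forces it to be a cusp; finally one checks that the sporadic non-cuspidal rational points on $X_0(p)$ for $p\in\{11,17,19,37,43,67,163\}$ do not lift to rational torsion points on $X_1(p)$.

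The main obstacle — and the heart of Mazur's memoir \cite{BM} — is precisely the finiteness of the Mordell--Weil group of the Eisenstein quotient and the accompanying structural theory of the Eisenstein ideal (its index, the order of $J[\mathfrak{I}]$, and the Galois structure of the relevant torsion). By contrast, the modular interpretation, the genus-zero parametrizations, and the finitely many small-genus verifications are routine once the foundational geometry of modular curves is in place, though the genus-two steps genuinely require an effective Chabauty input. A streamlined exposition would therefore isolate the Eisenstein-ideal finiteness statement as a black box cited from \cite{BM} and assemble the remaining pieces as above.
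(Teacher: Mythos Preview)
The paper does not prove this theorem at all: it is stated with attribution to Mazur \cite{BM} and used as a black box, so there is no ``paper's own proof'' to compare against. Your outline is a reasonable high-level sketch of the strategy in \cite{BM} (reduction to modular curves, explicit genus-zero and small-genus cases, and the Eisenstein-ideal argument for primes $p\ge 11$), but for the purposes of this paper no proof is expected --- the result is simply quoted.
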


It is a classical result that all elliptic curves with a torsion point of
order $N$ lie in a one parameter family where $N\in \{4,...,10,12\}$. The 
\textit{Tate normal form} of an elliptic curve $E$ with point $P=(0,0)$ is
given by 
\begin{equation*}
E_{N}:y^{2}+(1-c)xy-by=x^{3}-bx^{2}
\end{equation*}%
where the point $P$ has given order $N$.

If an elliptic curve in normal form has a point of order $N>3$, then
admissible change of variables transforms the curve to the Tate normal form,
in this case the point $P=(0,0)$ is a torsion point of maximal order. Kubert 
\cite{KU} gives a list of parameterizable torsion structures, which includes
one parameter family of elliptic curves $E$ defined over $\mathbb{Q}$ with a
torsion point of order $N$ where $N=4,...,10$, $12$. Some algorithms are
given by using the existence of such a family, see \cite{GO} for more
details. In order to describe when an elliptic curve defined over $\mathbb{Q}
$ has a point of given order $N$, we need the following result on
parametrization of torsion structures. Most cases of the following
parameterizations are proved by Husem\"{o}ller \cite{HS}.

\begin{theorem}[\protect\cite{GO}]
Every elliptic curve with point $P=(0$, $0)$ of order $N=4$, $...$, $10$, $%
12 $ can be written in the following Tate normal form%
\begin{equation*}
E_{N}:y^{2}+(1-c)xy-by=x^{3}-bx^{2},
\end{equation*}%
with the following relations:\newline
1. If $N=4$, then $b=\alpha $ and $c=0$, $\alpha \neq 0$.\newline
2. If $N=5$, then $b=\alpha $ and $c=\alpha $, $\alpha \neq 0$.\newline
3. If $N=6$, then $b=\alpha +\alpha ^{2}~$and $c=\alpha $, $\alpha \neq -1$, 
$0$.\newline
4. If $N=7$, then $b=\alpha ^{3}-\alpha ^{2}~$and $c=\alpha ^{2}-\alpha $, $%
\alpha \neq 0$, $1$.\newline
5. If $N=8$, then $b=(2\alpha -1)(\alpha -1)$ and$~c=b/\alpha $, $\alpha
\neq 0$, $\frac{1}{2}$, $1$.\newline
6. If $N=9$, then $c=\alpha ^{2}(\alpha -1)~$and $b=c(\alpha (\alpha -1)+1)$%
, $\alpha \neq 0$, $1$.\newline
7. If $N=10$, then $c=(2\alpha ^{3}-3\alpha ^{2}+\alpha )/(\alpha -(\alpha
-1)^{2})$ and$~b=c\alpha ^{2}/(\alpha -(\alpha -1)^{2})$, $\alpha \neq 0$, $%
\frac{1}{2}$, $1$.\newline
8. If $N=12$, then $c=(3\alpha ^{2}-3\alpha +1)(\alpha -2\alpha
^{2})/(\alpha -1)^{3}$ and$~b=c(-2\alpha ^{2}+2\alpha -1)/(\alpha -1)$, $%
\alpha \neq 0$, $\frac{1}{2}$, $1$.
\end{theorem}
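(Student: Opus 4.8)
The plan is to verify, for each value $N \in \{4,\dots,10,12\}$ separately, that the displayed relations between $b$, $c$ and the parameter $\alpha$ are precisely the conditions under which the point $P=(0,0)$ on the Tate normal form $E_N:y^2+(1-c)xy-by=x^3-bx^2$ has exact order $N$. First I would record the group law on $E_N$ specialized at $P=(0,0)$: since $P=(0,0)$, one computes $-P=(0,b)$, and the duplication and addition formulas simplify enormously because the $x$-coordinate of $P$ is $0$. Concretely, writing $[m]P=(x_m,y_m)$, one gets clean recursive expressions for $x_m$ and $y_m$ as rational functions of $b$ and $c$; these are exactly the normalized division-polynomial-type quantities, so the condition ``$[N]P=\mathcal O$'' becomes ``the denominator of $x_{N-1}$ (equivalently the relevant $\psi$-type polynomial) vanishes,'' while ``$[m]P\neq\mathcal O$ for $1\le m<N$'' rules out the lower-order degenerations.

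The main computational step is therefore: for each $N$, impose $[N]P=\mathcal O$. Using the chord-tangent process one finds that $[2]P$, $[3]P$, $\dots$ have $x$-coordinates that are explicit polynomials in $b,c$, and the equation cutting out ``order exactly $N$'' factors as $f_N(b,c)=0$ for an explicit irreducible polynomial $f_N$. One then checks that $f_N$ is a rational curve and writes down its rational parametrization by $\alpha$ — this is where the specific formulas (e.g. $b=(2\alpha-1)(\alpha-1)$, $c=b/\alpha$ for $N=8$) come from. For the small cases $N=4,5,6$ this is a direct hand computation: for $N=4$ one finds $[2]P=(b,bc-b)$ has to be a $2$-torsion point, i.e. its $y$-coordinate must satisfy the $2$-torsion condition, forcing $c=0$ (and $b=\alpha$ free); for $N=5$ one imposes $[2]P=-[3]P$, which collapses to $b=c$; for $N=6$ one imposes $[3]P$ is $2$-torsion, giving $b=c+c^2$; etc. For $N=7,8,9,10,12$ the defining polynomial $f_N(b,c)$ has higher degree and the parametrization is less obvious, but it is classical (Tate, Kubert) and can be read off by the standard substitution that uncovers the rational point structure of the modular curve $X_1(N)$.

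The serious obstacle is the case-by-case bookkeeping for $N\in\{10,12\}$: there the polynomial $f_N(b,c)$ is genuinely complicated, and one must be careful that the claimed parametrization is a bijection onto the locus of curves with a point of exact order $N$, excising the finitely many ``bad'' values of $\alpha$ (listed in each case: $\alpha\neq 0,\tfrac12,1$, etc.) at which either $E_N$ becomes singular or $P$ drops to lower order. Rather than re-derive all of this, I would invoke the cited sources — Husem\"oller \cite{HS} handles most cases, and \cite{GO} supplies the remaining ones — and limit the written proof to: (i) the reduction of ``$\operatorname{ord}(P)=N$'' to a polynomial equation in $b,c$ via the simplified group law at $(0,0)$; (ii) a uniform explanation that $X_1(N)\cong\mathbb P^1$ over $\mathbb Q$ for these $N$, which is exactly Mazur's Theorem 4.1 together with the fact that each such $X_1(N)$ has a rational point; and (iii) verification, by direct substitution, that the stated $(b(\alpha),c(\alpha))$ satisfy $f_N=0$ and that the excluded $\alpha$-values are exactly the degenerate ones. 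Step (iii) is routine algebra in each case, and step (ii) is the conceptual content that makes the one-parameter families exist in the first place.
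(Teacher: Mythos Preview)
The paper does not prove this theorem at all: it is stated with attribution to \cite{GO}, preceded by the remark that ``Most cases of the following parameterizations are proved by Husem\"oller \cite{HS},'' and then simply used. There is therefore no proof in the paper to compare your proposal against.

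That said, your plan is a faithful outline of how the result is actually established in the cited references. The reduction of $\operatorname{ord}(P)=N$ to an explicit polynomial condition $f_N(b,c)=0$ via the simplified group law at $(0,0)$, followed by a rational parametrization of the resulting curve (which is a model of $X_1(N)$, of genus zero with a rational point for these $N$), is precisely the standard argument. One minor caution: your step (ii) conflates two separate facts. Mazur's theorem (Theorem 4.1 in the paper) classifies which torsion groups occur over $\mathbb{Q}$, but it does not by itself give $X_1(N)\cong\mathbb{P}^1_{\mathbb{Q}}$; that is an independent (and older) genus computation for each $N\in\{4,\dots,10,12\}$, together with the existence of a rational cusp. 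With that adjustment, your sketch is sound.
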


Theorem 4.2 says that every elliptic curve with a point of order $N$ is
birationally equivalent to one of the Tate normal forms given above. We will
assume that the parameter $\alpha \in 
\mathbb{Z}
$ and the coefficients of $E_{N}$ are chosen to lie in $%
\mathbb{Z}
$. Hence for $N=8$, $10$, $12$, we transform $E_{N}$ into a birationally
equivalent curve $E_{N}^{\prime }$ having an equation with integral
coefficients. The equations of the birationally equivalent curves for $N=8$, 
$10$, $12$ are given, respectively, as follows:%
\begin{eqnarray*}
{\small E}_{8}^{\prime } &\text{:}&{\small y}^{2}{\small +(\alpha -\beta
)xy-\alpha }^{3}{\small \beta y=x}^{3}{\small -\alpha }^{2}{\small \beta x}%
^{2}\text{,\newline
} \\
{\small E}_{10}^{\prime } &\text{:}&{\small y}^{2}{\small +(\zeta }^{2}-%
{\small \alpha \beta \zeta )xy-\alpha }^{3}{\small \beta \zeta }^{4}{\small %
y=x}^{3}{\small -\alpha }^{3}{\small \beta \zeta }^{2}{\small x}^{2}\text{,}
\\
{\small E}_{12}^{\prime } &\text{:}&{\small y}^{2}{\small +(\alpha
-1)((\alpha -1)}^{3}{\small -\lambda )xy-(\alpha -1)}^{8}{\small \lambda
\theta y=x}^{3}{\small -(\alpha -1)}^{4}{\small \lambda \theta x}^{2}\text{,}
\end{eqnarray*}%
where $\alpha \neq 0$, $1$, 
\begin{equation}
\beta =(2\alpha -1)(\alpha -1)\text{, }\zeta =-\alpha ^{2}+3\alpha -1\text{, 
}  \label{be}
\end{equation}%
and 
\begin{equation}
\lambda =(3\alpha ^{2}-3\alpha +1)(\alpha -2\alpha ^{2})\text{, }\theta
=2\alpha -2\alpha ^{2}-1\text{.}  \label{la}
\end{equation}%
From now on, for simplicity of notation, we write $E_{8}$, $E_{10}$, $E_{12}$
for $E_{8}^{\prime }$, $E_{10}^{\prime }$, $E_{12}^{\prime }$, respectively.

In \cite[Theorem 3.2]{BG}, we give the general terms of the elliptic
divisibility sequences $(h_{n})_{n\geq 0}$ associated to a Tate normal form $%
E_{N}$ of an elliptic curve for some integer parameter $\alpha $. For
example, the general term of $(h_{n})_{n\geq 0}$ for $N=8$ is%
\begin{equation}
h_{n}=\varepsilon \alpha ^{\{(15n^{2}-p)/16\}}(\alpha
-1)^{\{(7n^{2}-q)/16\}}(2\alpha -1)^{\{(3n^{2}-r)/8\}}  \label{n8}
\end{equation}%
where 
\begin{equation*}
\varepsilon =\left\{ 
\begin{array}{ll}
{\small +1}\text{,} & \text{if }{\small n}\text{ }{\small \equiv
1,4,5,9,10,13,14~(16)}\text{ \ \ }{\small \ } \\ 
{\small -1}\text{,} & \text{if }{\small n}\text{ }{\small \equiv
2,3,6,7,11,12,15~(16)}\text{,}%
\end{array}%
\right.
\end{equation*}%
and 
\begin{equation*}
{\small p=}\left\{ 
\begin{array}{ll}
{\small 15}\text{,} & \text{if }{\small n}\text{ }{\small \equiv 1,7~(8)} \\ 
{\small 12}\text{,} & \text{if }{\small n}\text{ }{\small \equiv 2,6~(8)} \\ 
{\small 7}\text{,} & \text{if }{\small n}\text{ }{\small \equiv 3,5~(8)} \\ 
{\small 16}\text{,} & \text{if }{\small n}\text{ }{\small \equiv 4~(8),}%
\end{array}%
\right. {\small q=}\left\{ 
\begin{array}{ll}
{\small 7}\text{,} & \text{if }{\small n}\text{ }{\small \equiv 1,7~(8)} \\ 
{\small 12}\text{,} & \text{if }{\small n}\text{ }{\small \equiv 2,6~(8)} \\ 
{\small 15}\text{,} & \text{if }{\small n}\text{ }{\small \equiv 3,5~(8)} \\ 
{\small 16}\text{,} & \text{if }{\small n}\text{ }{\small \equiv 4~(8),}%
\end{array}%
\right. r~{\small =}\left\{ 
\begin{array}{ll}
{\small 3}\text{,} & \text{if }{\small n}\text{ }{\small \equiv 1,3,5,7~(8)}
\\ 
{\small 4,} & \text{if }{\small n}\text{ }{\small \equiv 2,6~(8)} \\ 
{\small 0}\text{,} & \text{if }{\small n}\text{ }{\small \equiv 4~(8)}\text{%
{\small .}}%
\end{array}%
\right.
\end{equation*}

In Section 2, we recall that if $(h_{n})_{n\geq 0}$ is a nonsingular EDS
associated to an elliptic curve $E$ given by a minimal Weierstrass equation
over $%
\mathbb{Q}
$ and a point $P\in E(%
\mathbb{Q}
)$, then there is a constant $\gamma \in 
\mathbb{Q}
^{\ast }$ such that 
\begin{equation*}
h_{n}=\gamma ^{n^{2}-1}F_{n}(P)\text{ \ \ \ for all }n\geq 0\text{.}
\end{equation*}%
Therefore, one can easily obtain the general term of the sequence $%
(F_{n})_{n\geq 0}$ associated to a Tate normal form $E_{N}$, by using the
relation above.

In this section we consider $(G_{n})_{n\geq 0}$ and $(H_{n})_{n\geq 0}$
sequences associated to a Tate normal form $E_{N}$ with torsion point $%
P=(0,0)$ and give the general terms of these sequences. We take $\gamma =1$
in (\ref{b}) so that $G_{n}=\phi _{n}$, $H_{n}=\omega _{n}$, and 
\begin{equation}
F_{n}=h_{n}\text{ \ \ \ for all }n\geq 0  \label{a2}
\end{equation}%
by (\ref{a1}).

In the following theorem we determine general terms of the sequences $%
(G_{n})_{n\geq 0}$ and $(H_{n})_{n\geq 0}$ associated to an elliptic curve
in Tate normal form with a torsion point $P=(0,0)$ of order $8$. For the
convenience of the reader, we have given the other cases in Appendix A. The
proof uses the general terms of sequences in \cite[Theorem 3.2]{BG}.

\begin{theorem}
Let $E_{8}$ be a Tate normal form of an elliptic curve with a torsion point $%
P=(0,0)$ of order $8$. Let $(G_{n})_{n\geq 0}$ and $(H_{n})_{n\geq 0}$ be
the sequences generated by the numerators of the $x$- and $y$-coordinates of
the multiples of $P$ as in (\ref{a}). Then the general terms of the
sequences $(G_{n})_{n\geq 0}$ and $(H_{n})_{n\geq 0}$ can be given by the
following formulas:\newline
\begin{equation}
{\small G}_{n}~{\small =}\left\{ 
\begin{array}{cc}
{\small 0}\text{,} & \text{if }{\small n}\text{ }{\small \equiv 1,7~(8)} \\ 
{\small \alpha }^{\{(15n^{2}+a_{1})/8\}}{\small (\alpha -1)}%
^{\{(7n^{2}-b_{1})/8\}}{\small (2\alpha -1)}^{\{(3n^{2}+c_{1})/4\}}\text{,}
& \text{otherwise,}%
\end{array}%
\right.  \label{13}
\end{equation}%
and%
\begin{equation}
{\small H}_{n}~{\small =}\left\{ 
\begin{array}{cc}
{\small 0}\text{,} & \text{if }{\small n}\text{ }{\small \equiv 1,6~(8)} \\ 
{\small \varepsilon \alpha }^{\{(45n^{2}+a_{2})/16\}}{\small (\alpha -1)}%
^{\{(21n^{2}-b_{2})/16\}}{\small (2\alpha -1)}^{\{(9n^{2}-c_{2})/8\}}\text{,}
& \text{otherwise,}%
\end{array}%
\right.
\end{equation}%
where $\alpha \neq 0$, $1$, 
\begin{equation*}
a_{1}=\left\{ 
\begin{array}{ll}
0\text{,} & \text{if }n\equiv 0~(8) \\ 
4\text{,} & \text{if }n\equiv 2,6~(8) \\ 
1\text{,} & \text{if }n\equiv 3,5~(8) \\ 
8\text{,} & \text{if }n\equiv 4~(8)\text{,}%
\end{array}%
\right. b_{1}=\left\{ 
\begin{array}{ll}
0\text{,} & \text{if }n\equiv 0~(8) \\ 
4\text{,} & \text{if }n\equiv 2,6~(8) \\ 
7\text{,} & \text{if }n\equiv 3,5~(8) \\ 
8\text{,} & \text{if }n\equiv 4~(8)\text{,}%
\end{array}%
\right. c_{1}=\left\{ 
\begin{array}{ll}
0\text{,} & \text{if }n\equiv 0,2,4,6~(8) \\ 
1\text{,} & \text{if }n\equiv 3,5~(8)\text{.}%
\end{array}%
\right.
\end{equation*}%
and%
\begin{equation*}
\varepsilon =\left\{ 
\begin{array}{ll}
+1\text{,} & \text{if }n\equiv 0,4,5,10,13~(16)\text{ \ \ }\  \\ 
-1\text{,} & \text{if }n\equiv 2,3,7,8,11,12,15~(16)\text{,}%
\end{array}%
\right.
\end{equation*}%
\begin{equation*}
a_{2}=\left\{ 
\begin{array}{ll}
{\small 0}\text{,} & \text{if }{\small n}\text{ }{\small \equiv 0~(8)} \\ 
-{\small 4}\text{,} & \text{if }{\small n}\text{ }{\small \equiv 2~(8)} \\ 
{\small 11}\text{,} & \text{if }{\small n}\text{ }{\small \equiv 3~(8)} \\ 
{\small 16}\text{,} & \text{if }{\small n}\text{ }{\small \equiv 4~(8)} \\ 
-{\small 5}\text{,} & \text{if }{\small n}\text{ }{\small \equiv 5~(8)} \\ 
{\small 3}\text{,} & \text{if }{\small n}\text{ }{\small \equiv 7~(8)}\text{%
{\small ,}}%
\end{array}%
\right. b_{2}=\left\{ 
\begin{array}{ll}
{\small 0}\text{,} & \text{if }{\small n}\text{ }{\small \equiv 0~(8)} \\ 
{\small 4}\text{,} & \text{if }{\small n}\text{ }{\small \equiv 2~(8)} \\ 
{\small 13}\text{,} & \text{if }{\small n}\text{ }{\small \equiv 3,5~(8)} \\ 
{\small 16}\text{,} & \text{if }{\small n}\text{ }{\small \equiv 4~(8)} \\ 
{\small 5}\text{,} & \text{if }{\small n}\text{ }{\small \equiv 7~(8)}\text{%
{\small ,}}%
\end{array}%
\right. ~c_{2}=\left\{ 
\begin{array}{ll}
{\small 0}\text{,} & \text{if }{\small n}\text{ }{\small \equiv 0,4~(8)} \\ 
{\small 4}\text{{\small ,}} & \text{if }{\small n}\text{ }{\small \equiv
2~(8)} \\ 
{\small 1}\text{,} & \text{if }{\small n}\text{ }{\small \equiv 3,7~(8)} \\ 
{\small -7}\text{,} & \text{if }{\small n}\text{ }{\small \equiv 5~(8)}\text{%
.}%
\end{array}%
\right.
\end{equation*}
\end{theorem}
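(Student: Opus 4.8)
The plan is to reduce everything to the known closed form for the elliptic divisibility sequence $(h_n)_{n\ge 0}$ attached to $E_8$, which is recorded in~\eqref{n8} (from \cite[Theorem 3.2]{BG}), together with the normalization $\gamma=1$ chosen in~\eqref{a2}, so that $F_n=h_n$, $G_n=\phi_n$, and $H_n=\omega_n$. With these identifications the defining recursions~\eqref{2b} and~\eqref{22} become, since $\gamma=1$,
\begin{equation*}
G_n = x F_n^{2}-F_{n+1}F_{n-1},\qquad
H_n = \bigl(F_{n-1}^{2}F_{n+2}-F_{n-2}F_{n+1}^{2}-F_2 F_n(a_1 G_n+a_3 F_n)\bigr)(2F_2)^{-1},
\end{equation*}
and for the Tate normal form $E_8$ we have $P=(0,0)$, so $x=0$: this kills the first term of $G_n$ and collapses $G_n=-F_{n+1}F_{n-1}$, while the $a_1G_n$ term in $H_n$ survives (here $a_1=\alpha-\beta$ with $\beta=(2\alpha-1)(\alpha-1)$, and $a_3=-\alpha^3\beta$ from the displayed equation for $E_8^{\prime}$). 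So first I would substitute $x=0$ and record $G_n=-F_{n-1}F_{n+1}$ and the resulting two-term-plus-correction expression for $H_n$.

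Next I would plug the explicit formula~\eqref{n8} for $h_n=F_n$ into $G_n=-F_{n-1}F_{n+1}$. The point is purely bookkeeping in the exponents: $n^2$, $(n-1)^2$, $(n+1)^2$ run through residues mod $16$ (resp. mod $8$) in a controlled way, and the sign $\varepsilon$ and the correction constants $p,q,r$ in~\eqref{n8} are governed by $n\bmod 8$ (resp. $n\bmod 16$). Multiplying $h_{n-1}h_{n+1}$ therefore produces $\alpha$-, $(\alpha-1)$-, and $(2\alpha-1)$-exponents that are quadratic in $n$ with a residue-dependent additive correction; matching these against $\{(15n^2+a_1)/8\}$, $\{(7n^2-b_1)/8\}$, $\{(3n^2+c_1)/4\}$ is a finite check over the residue classes $n\bmod 8$. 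The vanishing clause ``$G_n=0$ if $n\equiv 1,7\ (8)$'' is exactly the statement that $F_{n-1}F_{n+1}=0$ there, i.e. that $8\mid n-1$ or $8\mid n+1$, which is immediate because $F_8=h_8=0$ (the point has order $8$) and hence $F_m=0$ iff $8\mid m$. The sign bookkeeping: since $G_n=-F_{n-1}F_{n+1}$ and the two $\varepsilon$-signs from $h_{n-1}$ and $h_{n+1}$ multiply together with the extra $-1$, one checks they always cancel, which is why no $\varepsilon$ appears in the formula for $G_n$.

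For $H_n$ I would use $2F_2 H_n = F_{n-1}^2F_{n+2}-F_{n-2}F_{n+1}^2 - a_1 F_2 F_n G_n - a_3 F_2 F_n^2$, substitute the closed forms for all the $F_k$'s and for $G_n$ just obtained, and again track exponents. Here the subtlety is that $H_n$ is a \emph{sum} of several monomials in $\alpha,\alpha-1,2\alpha-1$, so a priori it need not itself be a monomial; the content of the theorem is that after collecting terms there is a common monomial factor and the remaining bracket telescopes to a unit (up to sign). So the real work, and the step I expect to be the main obstacle, is verifying for each residue class $n\bmod 16$ that the several contributions to $2F_2H_n$ have a common $\alpha$-, $(\alpha-1)$-, $(2\alpha-1)$-power that one can factor out, with the leftover factor independent of $\alpha$ and equal to $\pm 2F_2/(\text{that leftover in }F_2)$ so that the division by $2F_2$ is exact and leaves the claimed monomial. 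This amounts to checking the minimum of a handful of explicit quadratics-in-$n$-with-corrections agrees with $\{(45n^2+a_2)/16\}$ etc., class by class, together with one genuine cancellation identity per class; the vanishing ``$H_n=0$ if $n\equiv 1,6\ (8)$'' should fall out because $P$ has order $8$ forces $[n]P$ to be a $2$-torsion or trivial point exactly there (so $y$-coordinate $=0$), equivalently $\omega_n$ vanishes. I would organize the computation as a table over the residues mod $16$, using the already-proved Theorem~4.2 / \cite[Theorem 3.2]{BG} formula as a black box, and present only the resulting exponents rather than the intermediate algebra; the analogous but longer tables for $N=4,5,6,7,9,10,12$ are deferred to Appendix~A by the same method.
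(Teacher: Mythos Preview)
Your proposal is correct and follows essentially the same route as the paper: reduce to $G_n=-F_{n-1}F_{n+1}$ using $x=0$ and $\gamma=1$, then substitute the closed form~\eqref{n8} for $h_n=F_n$ and verify the exponents residue class by residue class modulo~$8$. The paper's proof does exactly this (it writes out the case $n\equiv 2\ (8)$ explicitly and declares the rest analogous), and for $(H_n)$ it simply says the argument is ``similar''; your discussion of the $H_n$ case---in particular the observation that one must check the several monomial summands share a common factor whose cofactor collapses---is more honest about where the actual work lies, but the method is the same.
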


\begin{proof}
We give the proof only for the sequence $(G_{n})_{n\geq 0}$ as the proof for 
$(H_{n})_{n\geq 0}$ is similar.

Let $n\equiv 1$ $(8)$. Then by (\ref{2b})%
\begin{equation*}
{\small G}_{8k+1}~{\small =-F_{8k+2}F}_{8k}\text{ \ for all }k\geq 0\text{,}
\end{equation*}%
since $x=0$. We note that ${\small F}_{n}~{\small =0}$ if and only if the
order $N$ of the point $P$ divides $n$. It follows that ${\small F}_{8k}~%
{\small =0}$ for all $k\geq 0$, hence ${\small G}_{8k+1}~{\small =0}$.

Now let $n\equiv 2$ $(8)$. Then by (\ref{2b})%
\begin{equation}
{\small G}_{8k+2}~{\small =-F_{8k+3}F}_{8k+1}\text{ for all }k\geq 0\text{.}
\label{2bb}
\end{equation}%
By (\ref{n8}) and (\ref{a2}) we obtain%
\begin{equation*}
{\small F}_{8k+1}~{\small =}{\small \alpha }^{60k^{2}+15k}{\small (\alpha -1)%
}^{28k^{2}+7k}{\small (2\alpha -1)}^{24k^{2}+6k}\text{ \ for all }k\geq 0%
\text{,}
\end{equation*}%
and 
\begin{equation*}
F_{8k+3}~{\small =-}{\small \alpha }^{60k^{2}+45k+8}{\small (\alpha -1)}%
^{28k^{2}+21k+3}{\small (2\alpha -1)}^{24k^{2}+18k+3}\text{ for all }k\geq 0%
\text{.}
\end{equation*}%
Now substituting these expressions into (\ref{2bb}) we derive that 
\begin{equation*}
{\small G}_{8k+2}~{\small =\alpha }^{120k^{2}+60k+8}{\small (\alpha -1)}%
^{56k^{2}+28k+3}{\small (2\alpha -1)}^{48k^{2}+24k+3}\text{.}
\end{equation*}%
for all $k\geq 0$. On the other hand by the general term formula in (\ref{13}%
) we have%
\begin{equation*}
{\small G}_{8k+2}~{\small =\alpha }^{120k^{2}+60k+8}{\small (\alpha -1)}%
^{56k^{2}+28k+3}{\small (2\alpha -1)}^{48k^{2}+24k+3}\text{,}
\end{equation*}%
which completes the proof for $n\equiv 2$ $(8)$. The remaining cases can be
proved in a similar manner.
\end{proof}

\begin{remark}
There is no Tate normal form of an elliptic curve with the torsion point of
order two or three, but in \cite{KU}, Kubert gives a list of elliptic curves
with torsion point of order two or three are 
\begin{equation}
E_{2}:y^{2}=x^{3}+a_{2}x^{2}+a_{4}x\text{, \ }a_{4}\neq 0\text{,}  \label{30}
\end{equation}%
and%
\begin{equation}
E_{3}:y^{2}+a_{1}xy+a_{3}y=x^{3}\text{, \ }a_{3}\neq 0\text{,}  \label{31}
\end{equation}%
respectively.
\end{remark}

The following theorem gives the general term of the sequence $(G_{n})_{n\geq
0}$ associated to $E_{2}$ and $E_{3}$, respectively and the general term of
the sequence $(H_{n})_{n\geq 0}$ associated to elliptic curve $E_{3}$. We
note the sequence $(H_{n})_{n\geq 0}$ associated to elliptic curve $E_{2}$
is not defined since $F_{2}=0$; see relation (\ref{22}). The proof of the
theorem is similar to the proof of Theorem 4.3.

\begin{theorem}
Let $E_{N}$ be an elliptic curve with the torsion point $P=(0,0)$ of order $%
N $ as in (\ref{30}) and (\ref{31}). Let $(G_{n})_{n\geq 0}$ and $%
(H_{n})_{n\geq 0}$ be the sequences generated by the numerators of the $x$-
and $y$-coordinates of the multiples of $P$ as in (\ref{a}). Then the
general term of the sequences $(G_{n})_{n\geq 0}$ and $(H_{n})_{n\geq 0}$
can be given by the following formulas:\newline
1. If $N=2$, then 
\begin{equation*}
G_{n}\ =\left\{ 
\begin{array}{cc}
0\text{,} & \text{if }n\text{ is odd} \\ 
a_{4}^{\{n^{2}/2\}}\text{,} & \text{if }n\text{ is even.}%
\end{array}%
\right.
\end{equation*}%
2. If $N=3$, then 
\begin{equation*}
G_{n}=\left\{ 
\begin{array}{cc}
0\text{,} & \text{if }n\equiv 1,2\text{ }(3) \\ 
a_{3}^{\{2n^{2}/3\}}\text{,} & \text{if }n\equiv 0\text{ }(3)\text{. }%
\end{array}%
\right.
\end{equation*}%
and%
\begin{equation*}
H_{n}=\left\{ 
\begin{array}{cc}
0\text{,} & \text{if }n\equiv 1\text{ }(3) \\ 
\varepsilon a_{3}^{n^{2}}\text{,} & \text{if }n\equiv 0,2\text{ }(3)%
\end{array}%
\right.
\end{equation*}%
\newline
where%
\begin{equation*}
{\small \varepsilon =}\left\{ 
\begin{array}{ll}
+1\text{,} & \text{if }n\equiv 0,5~(6) \\ 
-1\text{,} & \text{if }n\equiv 2,3~(6)\text{.}%
\end{array}%
\right.
\end{equation*}
\end{theorem}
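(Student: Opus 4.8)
The plan is to mimic the proof of Theorem 4.3, working directly from the defining recursions (\ref{2b}) and (\ref{22}) together with the known closed forms for $(h_n)_{n\ge 0}=(F_n)_{n\ge 0}$ attached to $E_2$ and $E_3$, which one obtains from \cite[Theorem 3.2]{BG} via the normalization $\gamma=1$, so that $G_n=\phi_n$, $H_n=\omega_n$ and $F_n=h_n$. Since the distinguished point is $P=(0,0)$, we have $x=0$, and (\ref{2b}) collapses to $G_n=-F_{n+1}F_{n-1}$, while (\ref{22}) becomes
\begin{equation*}
H_n=\bigl(F_{n-1}^2F_{n+2}-F_{n-2}F_{n+1}^2-F_2F_n(a_1G_n+a_3F_n)\bigr)(2F_2)^{-1}
\end{equation*}
(and for $E_2$ we note $\psi_2(P)=2y+a_1x+a_3=0$, so $H_n$ is simply undefined, which accounts for the stated exclusion). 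The arithmetic input is that $F_m=0$ if and only if $N\mid m$, and otherwise $F_m$ is (up to sign) an explicit monomial in the curve parameters.

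First I would handle $N=2$. Here $E_2:y^2=x^3+a_2x^2+a_4x$ has $a_1=a_3=0$, so $H_n$ is not defined and only $G_n=-F_{n+1}F_{n-1}$ is in play. When $n$ is odd, exactly one of $n\pm1$ is even, hence divisible by $N=2$, so $F_{n\pm1}=0$ for that index and $G_n=0$. When $n$ is even, $n\pm1$ are both odd, and substituting the closed form for $F_{n\pm1}$ from \cite[Theorem 3.2]{BG} and multiplying exponents should collapse to $a_4^{\{n^2/2\}}$; the sign contributions from the two odd-index terms must cancel, giving $+a_4^{\{n^2/2\}}$. This is a short bookkeeping check on exponents (and on whether $n\equiv0$ versus $n\equiv2\pmod4$ are uniform, which they are once $n$ is even).

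Next I would do $N=3$ for $E_3:y^2+a_1xy+a_3y=x^3$. For $G_n=-F_{n+1}F_{n-1}$: if $n\equiv1,2\pmod3$ then exactly one of $n\pm1$ is $\equiv0\pmod3$, so $F_{n\pm1}=0$ and $G_n=0$; if $n\equiv0\pmod3$, then $n\pm1$ are coprime to $3$, and inserting the monomials for $F_{n\pm1}$ should give $a_3^{\{2n^2/3\}}$ with the sign again resolving to $+1$. For $H_n$, one sets $F_2=a_3$ (since $\psi_2(P)=a_3$ when $x=y=0$) and uses $a_1G_n$: note that when $3\nmid n$, one of $G_n$'s factors vanishes (from the $G$ computation above) so the $a_1G_n$ term drops, and when $3\mid n$ it survives; in either case the formula
\begin{equation*}
H_n=\frac{F_{n-1}^2F_{n+2}-F_{n-2}F_{n+1}^2-a_3F_n(a_1G_n+a_3F_n)}{2a_3}
\end{equation*}
is to be evaluated case by case on $n\bmod 3$ (with $F_{n+2}=0$ exactly when $n\equiv1\pmod3$, forcing $H_n=0$ there after checking the other terms also vanish — indeed $F_{n-2}=0$ too when $n\equiv1$, and $F_n\ne0$ but is multiplied by $a_1G_n+a_3F_n$ which vanishes since $G_n=0$ and $F_n$ divides... here one must be careful: $a_3F_n\cdot a_3F_n$ need not vanish, so the vanishing of $H_n$ for $n\equiv1\pmod3$ requires the numerator $F_{n-1}^2F_{n+2}-F_{n-2}F_{n+1}^2-a_3^2F_n^2$ to be zero, which follows from the recursion relation $(\ref{22})$ itself together with $H_n$ having been defined to be a polynomial — alternatively, invoke that $H_n=\omega_n$ vanishes whenever $[n]P$ has $y$-coordinate $0$, i.e. when $[n]P$ is $2$-torsion, which for $n\equiv1\pmod3$ and $P$ of exact order $3$ never happens, so instead $H_n=0$ must be read off from $\omega_n$'s explicit monomial form). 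The cleanest route is: compute $H_n=\omega_n$ directly from its own recursion $\omega_n=(\psi_{n-1}^2\psi_{n+2}-\psi_{n-2}\psi_{n+1}^2-\psi_2\psi_n(a_1\phi_n+a_3\psi_n))(2\psi_2)^{-1}$ using the $\psi_m=F_m$ monomials, and verify the claimed exponents $n^2$ and the sign rule $\varepsilon$ from $n\bmod 6$.

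The main obstacle will be the sign analysis: the closed forms in \cite[Theorem 3.2]{BG} carry a sign $\varepsilon$ depending on a residue class, and products of several such terms (two for $G_n$, up to four for $H_n$) must be multiplied out and shown to reduce to the stated $\varepsilon$ (trivial, i.e. $+1$, for $G_n$; the displayed $n\bmod 6$ rule for $H_n$). I expect the exponent arithmetic to be entirely routine — everything is quadratic in $n$ with the curly-brace $\{\,\cdot\,\}$ denoting the integer value — but verifying that the numerator of $H_n$, which is an alternating sum of monomials of the \emph{same} total degree, actually collapses to a single monomial (rather than a genuine polynomial) is the real content; this relies on the special shape of $E_3$ (only one nonzero parameter up to the $a_1$ that multiplies the already-known $G_n$) and on the fact that $P=(0,0)$ is a torsion point, so that the $\psi_m$ are as degenerate as possible. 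I would organize the write-up by residue class modulo $N$ (and modulo $2N$ for the sign of $H_n$), exactly as in the proof of Theorem 4.3, treating one representative case in detail and remarking that the others are analogous.
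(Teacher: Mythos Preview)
Your proposal is correct and follows exactly the route the paper indicates (the paper's own proof is simply ``similar to the proof of Theorem 4.3''), namely reducing $G_n$ and $H_n$ via (\ref{2b}) and (\ref{22}) with $x=0$ and then plugging in the closed monomial form of $F_n=h_n$ from \cite[Theorem 3.2]{BG}. One small correction to your scratchwork: for $n\equiv 1\pmod 3$ you have $n-2\equiv 2$, so $F_{n-2}\neq 0$; your subsequent self-correction is right, and the quickest way to see $H_n=0$ there is the geometric one you mention---$[n]P=P=(0,0)$ with $F_n\neq 0$ forces $H_n=0$---rather than cancelling monomials by hand.
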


\section{Squares and Cubes in $(G_{n})_{n\geq 0}$ and $(H_{n})_{n\geq 0}$%
Sequences}

The problem of determining square and cube terms in linear sequences has
been\ considered by various authors, see \cite{AP}, \cite{PR}, \cite{PR1}, 
\cite{PR2}, and see also, \cite{AB1}, \cite{AB2}. Similar problem has also
been considered for non-linear sequences, see \cite{BO1}, \cite{BO2}, \cite%
{BG}, see also \cite{R}, \cite{VM}. In this section we determine square and
cube terms in the sequences $(G_{n})_{n\geq 0}$ and $(H_{n})_{n\geq 0}$
associated to a Tate normal form $E_{N}$ of an elliptic curve with a torsion
point $P=(0,0)$ of order $N$. Throughout this paper the symbol $\square $
means a square of a non-zero integer, i.e., $\square =\pm \beta ^{2}$ where $%
\beta $ is a non-zero integer, and $C$ means a cube of a non-zero integer.
Determining square and cube terms in these sequences leads to some equations
and these equations are similar to equations in \cite[Table 3]{BG}.
Therefore we use similar techniques in \cite{BG} for determining square and
cube terms in these sequences. We observe that the irreducible factors
appearing in the left hand side (if they are at least two) of these
equations are pairwise relatively prime (for example, one can easily show
that the irreducible factors in the 
\begin{equation*}
\alpha (\alpha -1)(2\alpha -1)(2\alpha ^{2}-2\alpha +1)(3\alpha ^{2}-3\alpha
+1)=\square ,
\end{equation*}%
are pairwise relatively prime, see \cite{BG}, p. 498). It follows that, if
the right hand side of the equation is $\square $ (or $C$),\ then every
irreducible factor is $\square $ (or $C$). It turns out that it is not
necessary to consider all irreducible factors in the left hand side. For
example, the equation%
\begin{equation*}
\alpha (2\alpha -1)(2\alpha ^{2}-2\alpha +1)=C
\end{equation*}%
implies that all three $\alpha $, $2\alpha -1$, and $2\alpha ^{2}-2\alpha +1$
are $C$, we only use the fact that the third one is $C$.

We use the tables in \cite{AP2} when our equations turned into Mordell's
equation. In some cases we will apply \textit{Elliptic Logarithm Method} to
find all integral solutions of our equations (this method has been developed
in \cite{STR} and, independently, in \cite{GEB} and now is implemented in
MAGMA \cite{MAG}; see also \cite{WBOS}){\small .}

Theorem 5.1 answers the following three questions:

(1) Which terms of the sequence $(G_{n})_{n\geq 0}$ (or $(H_{n})_{n\geq 0}$%
)\ can be $\square $ (or $C$) independent of $\alpha $?

(2) Which terms of the sequence $(G_{n})_{n\geq 0}$ (or $(H_{n})_{n\geq 0}$)
can be $\square $ (or $C$) with admissible choice of $\alpha $?

(3) Which terms of the sequence $(G_{n})_{n\geq 0}$ (or $(H_{n})_{n\geq 0}$)
can not be $\square $ (or $C$) independent of $\alpha $?

Here again we only consider the case $N=8$, for the convenience of the
reader, we have given the other cases in Appendix B.

\begin{theorem}
Let $E_{8}$ be a Tate normal form of an elliptic curve with a torsion point $%
P=(0,0)$ of order $8$. Let $(G_{n})_{n\geq 0}$ and $(H_{n})_{n\geq 0}$ be
the sequences generated by the numerators of the $x$- and $y$-coordinates of
the multiples of $P$ as in (\ref{a}). Let $G_{n}$ and $H_{n}$ $\neq 0$.%
\newline
1.\newline
\hspace*{0.35cm}$(i)$ $\bullet $ If $n\equiv 0~(8)$, then $G_{n}=\square $
for all $\alpha $ $\neq 0$, $1$, \newline
\hspace*{0.25cm} \ $\ \ \ \bullet $ if $n\equiv 2$, $6~(8)$, then $%
G_{n}=\square $ iff $(\alpha -1)(2\alpha -1)$ $=\square $, \\[0.02cm]
\hspace*{0.35cm} $\ \ \ \bullet $ otherwise $G_{n}\neq \square $ for all $%
\alpha $ $\neq 0$, $1$. \newline
\hspace*{0.25cm}$(ii)$ $\bullet $ If $n\equiv 0~(24)$, then $G_{n}=C$ for
all $\alpha $ $\neq 0$, $1$, \\[0.02cm]
\hspace*{0.35cm} $\ \ \ \bullet $ if $n\equiv 2$, $10$, $14$, $22~(24)$,
then $G_{n}=C$ iff $\alpha =C$,\quad\ \newline
\hspace*{0.25cm} \ $\ \ \ \bullet $ if $n\equiv 8$, $16~(24)$, then $G_{n}=C$
iff $\alpha -1=C$, \\[0.02cm]
\hspace*{0.35cm} $\ \ \ \bullet $ otherwise $G_{n}\neq C$ for all $\alpha $ $%
\neq 0$, $1$. \newline
2.\newline
\hspace*{0.25cm}$(i)$ $\bullet $ If $n\equiv 0$, $4$, $8$, $12~(16)$, then $%
H_{n}=\square $ for all $\alpha $ $\neq 0$, $1$, \newline
\hspace*{0.25cm} \ $\ \ \ \bullet $ if $n\equiv 3~(16)$, then $H_{n}=\square 
$ iff $\alpha -1$ $=\square $, \\[0.02cm]
\hspace*{0.35cm} $\ \ \ \bullet $ if $n\equiv 5$, $7~(16)$, then $%
H_{n}=\square $ iff $2\alpha -1$ $=\square $, \newline
\hspace*{0.25cm} \ $\ \ \ \bullet $ if $n\equiv 11~(16)$, then $%
H_{n}=\square $ iff $\alpha $ $=\square $, \\[0.02cm]
\hspace*{0.35cm} $\ \ \ \bullet $ otherwise $H_{n}\neq \square $ for all $%
\alpha $ $\neq 0$, $1$. \newline
\hspace*{0.25cm}$(ii)$ $\bullet $ If $n\equiv 0$ $(8)$, then $H_{n}=C$ for
all $\alpha $ $\neq 0$, $1$, \\[0.02cm]
\hspace*{0.35cm} $\ \ \ \bullet $ otherwise $H_{n}\neq C$ for all $\alpha $ $%
\neq 0$, $1$.
\end{theorem}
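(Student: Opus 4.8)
The plan is to derive the whole statement from the explicit closed forms for $G_n$ and $H_n$ in Theorem 4.3, reducing it to a finite bookkeeping of exponents plus a short list of Diophantine equations in $\alpha$. The structural input, already used in the preamble to this section, is that $\alpha$, $\alpha-1$ and $2\alpha-1$ are pairwise relatively prime. Hence for a monomial $G_n=\alpha^{e_1}(\alpha-1)^{e_2}(2\alpha-1)^{e_3}$, and likewise for $H_n=\varepsilon\,\alpha^{e_1}(\alpha-1)^{e_2}(2\alpha-1)^{e_3}$ with $\varepsilon=\pm1$ (irrelevant here, since $\square$ and $C$ are allowed a sign and $\pm1$ is a cube), unique factorization gives: $G_n=\square$ iff every base whose exponent is odd is a $\square$, and $G_n=C$ iff every base whose exponent is $\not\equiv0\pmod 3$ is a $C$. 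The vanishing loci ($n\equiv1,7\pmod 8$ for $G_n$ and $n\equiv1,6\pmod8$ for $H_n$) are read off Theorem 4.3 and excluded by the hypothesis $G_n,H_n\neq0$. So the proof splits into (a) computing the residues of $e_1,e_2,e_3$ and (b) settling the conditions on $\alpha$ that result.

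For (a): the exponents of $G_n$ have denominator $8$ or $4$, so their parities are governed by $n^2\bmod 16$, hence by $n\bmod 8$; for the cube question one instead needs the exponents modulo $3$, which brings in $n^2\bmod 3$ and so requires $n$ modulo $24$. The same computation for $H_n$, whose exponents have denominator $16$ or $8$, pulls $n^2\bmod32$ into the parity analysis, which is why the square statement for $H_n$ is phrased modulo $16$, while its cube analysis collapses again to $n\bmod8$. I would run once through the residue classes of $n$ — modulo $8$, $16$, $24$, $8$ respectively — and in each class compute $(e_1,e_2,e_3)$ modulo $2$ or $3$ from the piecewise data $(a_i,b_i,c_i)$ (and $\varepsilon$) of Theorem 4.3. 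Classes with all residues zero give the unconditional ``$=\square$ (resp. $=C$) for all $\alpha$'' branches; classes with exactly one nonzero residue give the single-factor iff-branches ($\alpha=\square$, $\alpha-1=\square$, $2\alpha-1=\square$, and the cube versions); the remaining classes produce conditions involving two or three of the bases at once.

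For (b): the multi-factor conditions that survive for $N=8$ are $(\alpha-1)(2\alpha-1)=\square$, which is left as the stated iff, and a short list of ``impossibility'' conditions — $\alpha(\alpha-1)=\square$, the conjunctions $\{\alpha=C,\ \alpha-1=C\}$, $\{\alpha-1=C,\ 2\alpha-1=C\}$, $\{\alpha=C,\ 2\alpha-1=C\}$, and $\alpha(\alpha-1)(2\alpha-1)=\square$ — each of which I would show has no solution with $\alpha\in\mathbb Z\setminus\{0,1\}$. The first two kinds are elementary: $4\alpha(\alpha-1)=(2\alpha-1)^2-1$ is a difference of squares equal to $1$, and two consecutive integers are never both cubes outside $\{0,\pm1\}$. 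The conjunctions $\{\alpha-1=C,\ 2\alpha-1=C\}$ and $\{\alpha=C,\ 2\alpha-1=C\}$ reduce to $x^3+y^3=2z^3$, which by Euler's descent has only the trivial solutions, again forcing $\alpha\in\{0,1\}$. Finally $\alpha(\alpha-1)(2\alpha-1)=\square$ clears (put $X=2\alpha$, then $X\mapsto X+1$) to the rank-zero congruent-number curve $Y^2=X^3-X$, whose integral points are exactly its three $2$-torsion points, so $\alpha\in\{0,1\}$ once more. For the heavier analogues that occur at the other torsion orders (Appendix B), one instead reads Mordell's equation off the tables of \cite{AP2} or applies the elliptic logarithm method \cite{STR}, \cite{GEB} as implemented in \cite{MAG}, exactly as in \cite{BG}.

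The only place requiring genuine work is part (b) — and within it, essentially the single curve $Y^2=X^3-X$ together with $x^3+y^3=2z^3$: the exponent bookkeeping of part (a) is purely mechanical, so the crux is certifying that no surviving product of $\alpha$, $\alpha-1$ and $2\alpha-1$ can be a square or a cube for admissible $\alpha$. That these particular Diophantine problems happen to be classical is what makes the $N=8$ case comparatively clean.
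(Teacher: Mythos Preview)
Your proposal is correct and follows essentially the same approach as the paper: read off the exponents of $\alpha$, $\alpha-1$, $2\alpha-1$ from Theorem~4.3, reduce them modulo $2$ or $3$ over the appropriate residue classes of $n$, use pairwise coprimality to split into factor-by-factor conditions, and dispose of the surviving Diophantine constraints. The only noteworthy difference is that the paper handles $\alpha(\alpha-1)(2\alpha-1)=\square$ more elementarily---coprimality already forces $\alpha(\alpha-1)=\square$, whence $(2\alpha-1)^2\mp(2\beta)^2=1$---rather than via the rank-zero curve $Y^2=X^3-X$, and phrases the cube obstruction as the Thue equation $2\beta_1^3-\beta_2^3=1$ (citing Mordell) rather than your equivalent $x^3+y^3=2z^3$.
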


\begin{proof}
We give the proof only for the sequence $(G_{n})_{n\geq 0}$ as the proof for 
$(H_{n})_{n\geq 0}$ is similar.

1. \textit{i}. We note that $G_{n}=0$ for\ $n\equiv 1$, $7~(\func{mod}8)$,
by (\ref{13}). It can easily be seen that $G_{n}=\square $ for every $\alpha 
$ $\neq 0$, $1$ for $n\equiv 0~(8)$, by using (\ref{13}).

If $n\equiv 2$, $6~(8)$, then $G_{n}=\square $ iff 
\begin{equation*}
(\alpha -1)(2\alpha -1)=\square 
\end{equation*}%
by (\ref{13}). This equation leads to 
\begin{equation}
(4\alpha -3)^{2}-8\beta ^{2}=1  \label{01}
\end{equation}%
or 
\begin{equation}
(4\alpha -3)^{2}+8\beta ^{2}=1\text{,}  \label{02}
\end{equation}%
where $\beta $ is a non zero-integer. The last equation is trivial equation
and the solutions of this equation do not provide any acceptable $\alpha $.
The first equation leads to Pell equation%
\begin{equation*}
\tau ^{2}-8\beta ^{2}=1
\end{equation*}%
where $\tau =4\alpha -3$. The solutions of this equation are $(3,1)$, $(17,6)
$, $...$ . Note that only the solutions of the form $\tau +3\equiv 0$ $(4)$
give the acceptable $\alpha $, and their number is infinite.

If $n\equiv 3$, $5~(8)$, then $G_{n}=\square $ iff 
\begin{equation*}
\alpha (\alpha -1)(2\alpha -1)=\square \text{,}
\end{equation*}%
and if $n\equiv 4~(8)$, then $G_{n}=\square $ iff 
\begin{equation*}
\alpha (\alpha -1)=\square
\end{equation*}%
by (\ref{13}). These last two equations lead to trivial equations 
\begin{equation}
(2\alpha -1)^{2}\pm \beta ^{2}=1  \label{025}
\end{equation}%
where $\beta $ is a non-zero integer. The solutions of these trivial
equations do not provide any acceptable $\alpha $, which completes the proof
of (\textit{i}).

\textit{ii}. If $1$, $7$, $9$, $15$, $17$, $23~(24)$, then $G_{n}=0$, if $%
n\equiv 0~(24)$, then $G_{n}=C$ for every $\alpha $ $\neq 0$, $1$, if $n$ $%
\equiv 2$, $10$, $14$, $22~(24)$, then $G_{n}=C$ iff $\alpha =C$, and if $%
n\equiv 8$, $16~(24)$, then $G_{n}=C$ iff $\alpha -1=C$, by (\ref{13}).

If $n\equiv 3$, $21~(24)$, then $G_{n}=C$ iff 
\begin{equation*}
\alpha ^{2}(\alpha -1)(2\alpha -1)=C\text{,}
\end{equation*}%
and if $n$ $\equiv 4$, $20~(24)$, then $G_{n}=C$ iff%
\begin{equation*}
\alpha (\alpha -1)=C\text{,}
\end{equation*}%
if $n$ $\equiv 6$, $18~(24)$, then $G_{n}=C$ iff%
\begin{equation*}
\alpha ^{2}(\alpha -1)=C\text{,}
\end{equation*}%
and if $n$ $\equiv $ $12$ $(24)$, then $G_{n}=C$ iff 
\begin{equation*}
\alpha (\alpha -1)^{2}=C
\end{equation*}%
by (\ref{13}). These equations lead to 
\begin{equation*}
\alpha (\alpha -1)=C\text{.}
\end{equation*}%
This equation leads to trivial equation%
\begin{equation}
\beta _{1}^{3}-\beta _{2}^{3}=1\text{,}  \label{026}
\end{equation}%
where $\alpha =\beta _{1}^{3}$, $\alpha -1=\beta _{2}^{3}$, and $\beta _{1}$%
, $\beta _{2}$ are non-zero integers. The solutions of this equation do not
provide any acceptable $\alpha $.

If $n$ $\equiv 5$, $11$, $19$, $13~(24)$, then $G_{n}=C$ iff 
\begin{equation*}
\alpha ^{2}(2\alpha -1)=C
\end{equation*}%
by (\ref{13}), or equivalently 
\begin{equation*}
\alpha (2\alpha -1)=C\text{.}
\end{equation*}%
The last equation leads to classical equation\footnote{{\small The equation }%
$x^{3}+2y^{3}=1${\small \ has the integer solution }$(x,y)=(-1,1)${\small ,
hence, by Theorem }$5${\small , Chapter 24 of \cite{MOR} can not have
further solutions with }$xy\neq 0${\small .}} 
\begin{equation}
2\beta _{1}^{3}+(-\beta _{2})^{3}=1\text{,}  \label{26}
\end{equation}%
where $\alpha =\beta _{1}^{3}$, $2\alpha -1=\beta _{2}^{3}$, and $\beta _{1}$%
, $\beta _{2}$ are non-zero integers. The solution of this equation does not
provide any acceptable $\alpha $, which completes the proof of (\textit{ii}).
\end{proof}

In the following theorem we determine square and cube terms in the sequence $%
(G_{n})_{n\geq 0}$ associated to elliptic curves $E_{2}$ and $E_{3}$,
respectively, and square and cube terms in the sequence $(H_{n})_{n\geq 0}$
associated to elliptic curve $E_{3}$. The proof is similar to the proof of
Theorem 5.1.

\begin{theorem}
$E_{N}$ be an elliptic curve with the torsion point $P=(0,0)$ of order $N$
as in (\ref{30}) and (\ref{31}). Let $(G_{n})_{n\geq 0}$ and $(H_{n})_{n\geq
0}$ be the sequences generated by the numerators of the $x$- and $y$%
-coordinates of the multiples of $P$ as in (\ref{a}), and let $G_{n}$ $\neq
0 $.\newline
{}1. Let $N=2$. \newline
\hspace*{0.35cm}$(i)$ $\bullet $ $G_{n}=\square $ for every non-zero $a_{4}$%
. \newline
\hspace*{0.20cm}$(ii)$ $\bullet $ If $n\equiv 0~(6)$, then $G_{n}=C$ for
every non-zero $a_{4}$, \\[0.02cm]
\hspace*{0.35cm} $\ \ \ \bullet $ otherwise $G_{n}=C$ iff $a_{4}$ $=C$.\quad 
\newline
2. Let $N=3$. \newline
\hspace*{0.35cm}$(i)$ $\bullet $ $G_{n}=\square $ for every non-zero $a_{3}$%
. \newline
\hspace*{0.20cm}$(ii)$ $\bullet $ $G_{n}=C$\textit{\ }for every non-zero $%
a_{3}$.{}\newline
{} $(iii)$ $\bullet $ If $n\equiv 0$, $2~(6)$, then $H_{n}=\square $ for
every non-zero $a_{3}$, \newline
\hspace*{0.15cm} \ $\ \ \ \bullet $ otherwise $H_{n}=\square $ iff $a_{3}$ $%
=\square $, \newline
$\ (iv)$ $\bullet $ If $n\equiv 0~(3)$, then $H_{n}=C$ for every non-zero $%
a_{3}$, \\[0.02cm]
\hspace*{0.20cm} $\ \ \ \bullet $ otherwise $H_{n}=C$ iff $a_{3}$ $=C$.
\end{theorem}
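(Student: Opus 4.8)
The plan is to follow exactly the strategy used for the order-$8$ case in Theorem 5.1, but with the (much simpler) explicit formulas for $G_n$, $H_n$ supplied by Theorem 4.5. First I would recall from Theorem 4.5 that for $N=2$ we have $G_n=0$ for $n$ odd and $G_n=a_4^{n^2/2}$ for $n$ even, while for $N=3$ we have $G_n=0$ for $n\equiv 1,2\ (3)$ and $G_n=a_3^{2n^2/3}$ for $n\equiv 0\ (3)$, and $H_n=0$ for $n\equiv 1\ (3)$ and $H_n=\varepsilon a_3^{n^2}$ otherwise. Since in every non-vanishing case $G_n$ (resp.\ $H_n$) is, up to sign, a single power of one of the curve parameters, the whole analysis reduces to asking when an expression of the shape $\pm a^{k}$ is a square (resp.\ a cube), where $k$ depends on the residue of $n$.

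The key observation, just as in the proof of Theorem 5.1, is that $a^{k}=\square$ is automatic whenever $k$ is even, and $a^{k}=\square$ iff $a=\square$ when $k$ is odd (and similarly $a^k=C$ is automatic when $3\mid k$, and $a^k=C$ iff $a=C$ otherwise); signs are handled by the convention $\square=\pm\beta^2$, $C=\beta^3$. So the entire proof is a parity/divisibility bookkeeping on the exponents. For $N=2$: the exponent is $n^2/2$ with $n$ even; writing $n=2m$ gives exponent $2m^2$, which is always even, so $G_n=\square$ for every nonzero $a_4$ — this is part $1(i)$. For cubes, $2m^2\equiv 0\ (3)$ iff $m\equiv 0\ (3)$ iff $n\equiv 0\ (6)$, giving $G_n=C$ unconditionally in that case and $G_n=C\iff a_4=C$ otherwise — part $1(ii)$. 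For $N=3$: the exponent of $a_3$ in $G_n$ is $2n^2/3$ with $3\mid n$; writing $n=3m$ gives $6m^2$, always even and always divisible by $3$, so $G_n$ is both a square and a cube for every nonzero $a_3$ — parts $2(i)$ and $2(ii)$. For $H_n$ with $n\equiv 0,2\ (3)$: the exponent is $n^2$; it is even iff $n$ is even, so among the admissible residues mod $6$ one checks $n\equiv 0,2\ (6)$ gives $H_n=\square$ unconditionally and $n\equiv 3,5\ (6)$ gives $H_n=\square\iff a_3=\square$ — part $2(iii)$; and $n^2\equiv 0\ (3)$ iff $n\equiv 0\ (3)$, giving $H_n=C$ unconditionally for $n\equiv 0\ (3)$ and $H_n=C\iff a_3=C$ otherwise — part $2(iv)$. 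The sign $\varepsilon$ in $H_n$ never obstructs anything because $\pm a_3^{n^2}$ with $n^2$ even is already a square of an integer, and a cube can absorb the sign freely.

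The only genuinely delicate point — and the one I would write out carefully — is matching the residue classes: Theorem 4.5 states the vanishing and the exponent formulas in terms of $n \bmod 2$ or $n\bmod 3$, while the conclusions are phrased mod $6$; one must intersect these with the parity of $n$ (for squares) and with $n\bmod 3$ (for cubes) to land on the stated residue classes mod $6$. This is the analogue of the residue juggling that occupies the bulk of the proof of Theorem 5.1, but here it is short because there is only one prime parameter in play at a time. Unlike the $N=8$ case, no Pell equations, Mordell equations, or elliptic-logarithm computations arise, since the "left-hand side" is a single irreducible (indeed, a single variable), so the pairwise-coprimality remark and the tables of \cite{AP2} are not needed. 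Thus the main obstacle is purely combinatorial/notational — getting the residue conditions exactly right — rather than arithmetic; once those are pinned down, each bullet follows by the parity-of-exponent criterion stated above.
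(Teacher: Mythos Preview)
Your proposal is correct and follows exactly the approach the paper intends: the paper itself gives no detailed proof here, saying only that it is ``similar to the proof of Theorem~5.1,'' and what you outline---reading off the single-parameter power formulas from Theorem~4.5 and reducing everything to parity/divisibility of the exponent, with the residue-class bookkeeping modulo~$6$---is precisely that simplification. Your observation that no Pell or Mordell equations arise (because there is only one irreducible factor) is the right explanation for why this case is so much shorter than $N=8$.
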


\bigskip \appendix{}

\section{\protect\appendix{}}

In the following theorems we determine general terms of the sequences $%
(G_{n})_{n\geq 0}$ and $(H_{n})_{n\geq 0}$ associated to an elliptic curve
in Tate normal form with a torsion point $P=(0,0)$ of order $N$. The proofs
are similar to the proof of Theorem 4.3.

\begin{theorem}
Let $E_{N}$ be a Tate normal form of an elliptic curve with a torsion point $%
P=(0,0)$ of order $N$. Let $(G_{n})_{n\geq 0}$ be the sequence generated by
the numerators of the $x$-coordinates of the multiples of $P$ as in (\ref{a}%
). Let $\zeta ,\lambda ,\theta $ be as in (\ref{be}) and (\ref{la}). Then
the general term of the sequence $(G_{n})_{n\geq 0}$ can be given by the
following formulas:\newline
1. If $N=4$, then 
\begin{equation}
{\small G}_{n}{\small \ =}\left\{ 
\begin{array}{cc}
0\text{,} & \text{if }n\text{ is odd} \\ 
\alpha ^{\{3n^{2}/4\}}\text{,} & \text{if }n\text{ is even, }%
\end{array}%
\right.  \label{5}
\end{equation}%
\newline
where $\alpha \neq 0$. \newline
2. If $N=5$, then%
\begin{equation}
{\small G}_{n}{\small \ =}\left\{ 
\begin{array}{cc}
0\text{,} & \text{if }{\small n}\text{ }{\small \equiv 1,4~(5)} \\ 
\alpha ^{\{(4n^{2}-a)/5\}}\text{,} & \text{otherwise,}%
\end{array}%
\right.  \label{7}
\end{equation}%
where $\alpha \neq 0$, and 
\begin{equation*}
a~{\small =}\left\{ 
\begin{array}{ll}
0\text{,} & \text{if }n\equiv 0~(5) \\ 
1\text{,} & \text{if }n\equiv 2,3~(5)\text{.}%
\end{array}%
\right.
\end{equation*}%
\newline
3. If $N=6$, then%
\begin{equation}
{\small G}_{n}~{\small =}\left\{ 
\begin{array}{cc}
0\text{,} & \text{if }{\small n}\text{ }{\small \equiv 1,5~(6)} \\ 
{\small \alpha }^{\{(5n^{2}-a)/6\}}{\small (\alpha +1)}^{\{(2n^{2}+b)/3\}}%
\text{,} & \text{otherwise,}%
\end{array}%
\right.  \label{9}
\end{equation}%
where $\alpha \neq -1$, $0$, and 
\begin{equation*}
~a~{\small =}\left\{ 
\begin{array}{ll}
0\text{,} & \text{if }n\equiv 0~(6) \\ 
2\text{,} & \text{if }n\equiv 2,4~(6) \\ 
3\text{,} & \text{if }n\equiv 3~(6)\text{,}%
\end{array}%
\text{ }\right. b~{\small =}\left\{ 
\begin{array}{ll}
0\text{,} & \text{if }n\equiv 0,3~(6) \\ 
1\text{,} & \text{if }n\equiv 2,4~(6)\text{.}%
\end{array}%
\right. \newline
\end{equation*}%
\newline
4. If $N=7$, then%
\begin{equation}
{\small G}_{n}~{\small =}\left\{ 
\begin{array}{cc}
0\text{,} & \text{if }n\equiv 1,6~(7) \\ 
\alpha ^{\{(10n^{2}+a)/7\}}(\alpha -1)^{\{(6n^{2}-b)/7\}}\text{,} & \text{%
otherwise,}%
\end{array}%
\right.  \label{11}
\end{equation}%
where $\alpha \neq 0$, $1$, and 
\begin{equation*}
a=\left\{ 
\begin{array}{cc}
0\text{,} & \text{if }n\equiv 0~(7) \\ 
2\text{,} & \text{if }n\equiv 2,5~(7) \\ 
1\text{,} & \text{if }n\equiv 3,4~(7)\text{,}%
\end{array}%
\right. b=\left\{ 
\begin{array}{cc}
0\text{,} & \text{if }n\equiv 0~(7) \\ 
3\text{,} & \text{if }n\equiv 2,5~(7) \\ 
5\text{,} & \text{if }n\equiv 3,4~(7)\text{.}%
\end{array}%
\right.
\end{equation*}%
5. If $N=9$, then 
\begin{equation}
{\small G}_{n}~{\small =}\left\{ 
\begin{array}{cc}
{\small 0}\text{,} & \text{if }{\small n}\text{ }{\small \equiv 1,8~(9)} \\ 
{\small \alpha }^{\{(14n^{2}-a)/9\}}{\small (\alpha -1)}^{\{(8n^{2}-b)/9\}}%
\eta ^{\{(2n^{2}+c)/3\}}\text{,} & \text{otherwise}%
\end{array}%
\right.  \label{15}
\end{equation}%
where $\alpha \neq 0$, $1$, $\eta =\alpha ^{2}-\alpha +1$, and 
\begin{equation*}
a=\left\{ 
\begin{array}{ll}
0\text{,} & \text{if }n\equiv 0,3,6~(9) \\ 
2\text{,} & \text{if }n\equiv 2,7~(9) \\ 
-1\text{,} & \text{if }n\equiv 4,5~(9)\text{,}%
\end{array}%
\right. b=\left\{ 
\begin{array}{ll}
0\text{,} & \text{if }n\equiv 0~(9) \\ 
5\text{,} & \text{if }n\equiv 2,7~(9) \\ 
9\text{,} & \text{if }n\equiv 3,6~(9) \\ 
11\text{,} & \text{if }n\equiv 4,5~(9)\text{,}%
\end{array}%
\right.
\end{equation*}%
\begin{equation*}
c=\left\{ 
\begin{array}{ll}
0\text{,} & \text{if }n\equiv 0,3,6~(9) \\ 
1\text{,} & \text{if }n\equiv 2,4,5,7~(9)\text{.}%
\end{array}%
\right.
\end{equation*}%
6. If $N=10$, then%
\begin{equation}
{\small G}_{n}~{\small =}\left\{ 
\begin{array}{cc}
{\small 0}\text{,} & \text{if }{\small n}\text{ }{\small \equiv 1,9~(10)} \\ 
\begin{array}{c}
{\small \alpha }^{\{(21n^{2}+a)/10\}}{\small (\alpha -1)}^{\{(9n^{2}-b)/10\}}
\\ 
\times {\small (2\alpha -1)}^{\{(4n^{2}-c)/5\}}{\small \zeta }%
^{\{(5n^{2}+d)/5\}}\text{,}%
\end{array}
& \text{otherwise,}%
\end{array}%
\right.  \label{17}
\end{equation}%
\newline
where $\alpha \neq 0$, $1$,%
\begin{equation*}
a=\left\{ 
\begin{array}{ll}
0\text{,} & \text{if }n\equiv 0~(10) \\ 
6\text{,} & \text{if }n\equiv 2,8~(10) \\ 
1\text{,} & \text{if }n\equiv 3,7~(10) \\ 
4\text{,} & \text{if }n\equiv 4,6~(10) \\ 
5\text{,} & \text{if }n\equiv 5~(10)\text{,}%
\end{array}%
\right. b=\left\{ 
\begin{array}{ll}
0\text{,} & \text{if }n\equiv 0~(10)\text{,} \\ 
6\text{,} & \text{if }n\equiv 2,8~(10) \\ 
11\text{,} & \text{if }n\equiv 3,7~(10) \\ 
14\text{,} & \text{if }n\equiv 4,6~(10) \\ 
15\text{,} & \text{if }n\equiv 5~(10)\text{,}%
\end{array}%
\right.
\end{equation*}%
and 
\begin{equation*}
c=\left\{ 
\begin{array}{ll}
0\text{,} & \text{if }n\equiv 0,5~(10) \\ 
1\text{,} & \text{if }n\equiv 2,3,7,8~(10) \\ 
-1\text{,} & \text{if }n\equiv 4,6~(10)\text{,}%
\end{array}%
\right. d=\left\{ 
\begin{array}{ll}
0\text{,} & \text{if }n\equiv 0,2,4,6,8~(10) \\ 
1\text{,} & \text{if }n\equiv 3,5,7~(10)\text{.}%
\end{array}%
\right.
\end{equation*}%
\newline
7. If $N=12$, then%
\begin{equation}
{\small G}_{n}~{\small =}\left\{ 
\begin{array}{cc}
{\small 0}\text{,} & \text{if }{\small n}\text{ }{\small \equiv 1,11~(12)}
\\ 
\begin{array}{c}
\varepsilon {\small \alpha }^{\{(n^{2}-~{\small a})/6\}}{\small (\alpha -1)}%
^{\{(59n^{2}+{\small b})/12\}} \\ 
\times {\small (2\alpha -1)}^{\{(n^{2}-{\small c})/12\}}{\small \lambda }%
^{\{(3n^{2}+{\small d})/4\}}{\small \theta }^{\{(2n^{2}+{\small e})/3\}}%
\text{,}%
\end{array}
& \text{otherwise,}%
\end{array}%
\right.  \label{19}
\end{equation}%
where $\alpha \neq 0$, $1$,%
\begin{equation*}
\varepsilon =\left\{ 
\begin{array}{cc}
+1\text{,} & \text{if }n\equiv 0,2,3,9,10~(12) \\ 
-1\text{,} & \text{if }n\equiv 4,5,6,7,8~(12)\text{,}%
\end{array}%
\right.
\end{equation*}%
\begin{equation*}
a=\left\{ 
\begin{array}{cc}
0\text{,} & \text{if }n\equiv 0~(12) \\ 
4\text{,} & \text{if }n\equiv 2,10~(12) \\ 
9\text{,} & \text{if }n\equiv 3,9~(12) \\ 
10\text{,} & \text{if }n\equiv 4,8~(12) \\ 
13\text{,} & \text{if }n\equiv 5,7~(12) \\ 
12\text{,} & \text{if }n\equiv 6~(12)\text{,}%
\end{array}%
\right. \text{ }b=\left\{ 
\begin{array}{cc}
0\text{,} & \text{if }n\equiv 0~(12) \\ 
4\text{,} & \text{if }n\equiv 2,10~(12) \\ 
9\text{,} & \text{if }n\equiv 3,9~(12) \\ 
16\text{,} & \text{if }n\equiv 4,8~(12) \\ 
1\text{,} & \text{if }n\equiv 5,7~(12) \\ 
24\text{,} & \text{if }n\equiv 6\text{ }(12)\text{,}%
\end{array}%
\right.
\end{equation*}%
\begin{equation*}
c=\left\{ 
\begin{array}{cc}
0\text{,} & \text{if }n\equiv 0,6~(12)\text{,} \\ 
4\text{,} & \text{if }n\equiv 2,4,8,10~(12) \\ 
9\text{,} & \text{if }n\equiv 3,9~(12) \\ 
1\text{,} & \text{if }n\equiv 5,7~(12)\text{,}%
\end{array}%
\right. d=\left\{ 
\begin{array}{cc}
0\text{,} & \text{if }n\equiv 0,2,4,6,8,10~(12) \\ 
1\text{,} & \text{if }n\equiv 3,5,7,9~(12)\text{,}%
\end{array}%
\right.
\end{equation*}%
and%
\begin{equation*}
e=\left\{ 
\begin{array}{cc}
0\text{,} & \text{if }n\equiv 0,3,6,9~(12) \\ 
1\text{,} & \text{if }n\equiv 2,4,5,7,8,10~(12)\text{.}%
\end{array}%
\right.
\end{equation*}
\end{theorem}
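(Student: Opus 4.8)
The plan is to follow the proof of Theorem 4.3. Throughout I take $\gamma=1$ in (\ref{b}), so that $G_n=\phi_n$ and $F_n=h_n$ for all $n\geq 0$ by (\ref{a2}), where $(h_n)_{n\geq 0}$ is the elliptic divisibility sequence attached to $E_N$ whose general term is recorded in \cite[Theorem 3.2]{BG} (for $N=8$ this is (\ref{n8})). Since the torsion point is $P=(0,0)$, its $x$-coordinate vanishes, so the recursion (\ref{2b}) collapses to
\begin{equation*}
G_n=-F_{n+1}(P)F_{n-1}(P)=-h_{n+1}h_{n-1}\qquad\text{for all }n\geq 1 .
\end{equation*}
Because $F_m(P)=h_m=0$ exactly when $N\mid m$, this gives $G_n=0$ whenever $n\equiv 1$ or $n\equiv N-1\pmod N$, accounting for the first line of each of (\ref{5})--(\ref{19}); for every other residue class $h_{n+1}$ and $h_{n-1}$ are both nonzero.

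For a non-vanishing class I would fix $N\in\{4,5,6,7,9,10,12\}$ and a residue $j$ with $2\leq j\leq N-2$ (refined modulo $2N$ because of the $\pm1$ prefactors carried by $(h_n)$), write $n=Nk+j$, and substitute into $G_n=-h_{n+1}h_{n-1}$ the closed forms for $h_{Nk+j+1}$ and $h_{Nk+j-1}$ from \cite[Theorem 3.2]{BG}. Multiplying these two monomials, the exponent of each irreducible factor --- $\alpha$, $\alpha-1$, $2\alpha-1$, and, depending on $N$, one or two of $\eta=\alpha^2-\alpha+1$, $\zeta$, $\lambda$, $\theta$ --- becomes a quadratic polynomial in $k$; replacing $k=(n-j)/N$ and reorganizing then recovers the normalized form $\{(cn^2+\text{const})/m\}$ displayed in the statement, with the additive constant being the unique value in its range that makes the exponent an integer for $n$ in that class. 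I would run through all admissible $j$ and then through each $N$ in turn to obtain (\ref{5})--(\ref{19}).

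There is one point beyond this bookkeeping, namely the sign. The product $h_{n+1}h_{n-1}$ inherits the product of the $\pm1$ prefactors of $h_{n+1}$ and $h_{n-1}$ from \cite[Theorem 3.2]{BG}, so I would check that this, times the overall $-1$ in $G_n=-h_{n+1}h_{n-1}$, equals $+1$ for $N\in\{4,5,6,7,9,10\}$ (consistent with the absence of a prefactor in (\ref{5})--(\ref{17})) and equals the factor $\varepsilon$ of (\ref{19}) when $N=12$ --- a finite verification over residues modulo $2N$. The main obstacle is simply the bulk of this exponent arithmetic: re-expressing the $\tfrac1{N^2}$-scaled quadratics in $k$ as the prescribed normalized quadratics in $n$ while keeping the residue-dependent constants mutually consistent is long and error-prone for the larger moduli (especially $N=10$ and $N=12$, where four and five distinct irreducible factors and many classes occur), but it needs no idea beyond the one already used in the proof of Theorem 4.3.
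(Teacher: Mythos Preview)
Your proposal is correct and follows precisely the paper's own approach: the paper states that the proofs of Theorem~A.1 are ``similar to the proof of Theorem~4.3,'' and the method you outline --- reducing (\ref{2b}) to $G_n=-h_{n+1}h_{n-1}$ via $x=0$, reading off the vanishing classes from $N\mid m\iff h_m=0$, and then substituting the closed forms of \cite[Theorem~3.2]{BG} class by class --- is exactly that method. Your remarks on the sign bookkeeping (checking the product of the $\pm1$ prefactors of $h_{n\pm1}$ against the explicit $\varepsilon$ for $N=12$ and the absence of a prefactor for the other $N$) are an accurate description of what remains to be verified.
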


\begin{theorem}
Let $E_{N}$ be a Tate normal form of an elliptic curve with a torsion point $%
P=(0,0)$ of order $N$. Let $(H_{n})_{n\geq 0}$ be the sequence generated by
the numerators of the $y$-coordinates of the multiples of $P$ as in (\ref{a}%
). Let $\zeta ,\lambda ,\theta $ be as in (\ref{be}) and (\ref{la}). Then
the general term of the sequence $(H_{n})_{n\geq 0}$ can be given by the
following formulas:\newline
1. If $N=4$, then%
\begin{equation}
{\small H}_{n}{\small \ =}\left\{ 
\begin{array}{cc}
0\text{,} & \text{if }n\equiv 1,2~(4) \\ 
\varepsilon \alpha ^{\{(9n^{2}-a)/8\}}\text{,} & \text{otherwise,}%
\end{array}%
\right.
\end{equation}%
where $\alpha \neq 0$, and 
\begin{equation*}
{\small \varepsilon =}\left\{ 
\begin{array}{ll}
+1\text{,} & \text{if }n\equiv 0~(8) \\ 
-1\text{,} & \text{if }n\equiv 3,4,7~(8)\text{,}\ 
\end{array}%
\right. a=\left\{ 
\begin{array}{ll}
0\text{,} & \text{if }n\equiv 0~(4)\  \\ 
1\text{,} & \text{if }n\equiv 3~(4)\text{.}%
\end{array}%
\right.
\end{equation*}%
\newline
2. If $N=5$, then%
\begin{equation}
{\small H}_{n}=\left\{ 
\begin{array}{cc}
0\text{,} & \text{if }n\equiv 1,3~(5)\text{,} \\ 
\varepsilon \alpha ^{\{(6n^{2}-a)/5\}}\text{,} & \text{otherwise,}%
\end{array}%
\right.
\end{equation}%
where $\alpha \neq 0$, and 
\begin{equation*}
{\small \varepsilon =}\left\{ 
\begin{array}{ll}
+1\text{,} & \text{if }n\equiv 0,4,7~(10) \\ 
-1\text{,} & \text{if }n\equiv 2,5,9~(10)\text{,}%
\end{array}%
\right. a=\left\{ 
\begin{array}{cc}
0\text{,} & \text{if }n\equiv 0~(5) \\ 
-1\text{,} & \text{if }n\equiv 2~(5) \\ 
1\text{,} & \text{if }n\equiv 4~(5)\text{.}%
\end{array}%
\right.
\end{equation*}%
\newline
3. If $N=6$, then%
\begin{equation}
{\small H}_{n}~{\small =}\left\{ 
\begin{array}{cc}
0\text{,} & \text{if }{\small n}\equiv {\small 1,4~(6)}\text{,} \\ 
\varepsilon \alpha ^{\{(5n^{2}-a)/4\}}(\alpha +1)^{n^{2}}\text{,} & \text{%
otherwise,}%
\end{array}%
\right.
\end{equation}%
where $\alpha \neq -1$, $0$, and%
\begin{equation*}
{\small \varepsilon =}\left\{ 
\begin{array}{ll}
+1\text{,} & \text{if }n\equiv 0,5,6,9~(12)\text{ \ \ }\  \\ 
-1\text{,} & \text{if }n\equiv 2,3,8,11~(12)\text{,}%
\end{array}%
\right. a=\left\{ 
\begin{array}{ll}
0\text{,} & \text{if }n\equiv 0,2~(6) \\ 
1\text{,} & \text{if }n\equiv 3,5~(6)\text{.}%
\end{array}%
\right.
\end{equation*}%
\newline
4. If $N=7$, then%
\begin{equation}
{\small H}_{n}{\small ~=}\left\{ 
\begin{array}{cc}
{\small 0}\text{,} & \text{if }{\small n}\text{ }{\small \equiv 1,5~(7)} \\ 
{\small \varepsilon \alpha }^{\{(15n^{2}-a)/7\}}{\small (\alpha -1)}%
^{\{(9n^{2}-b)/7\}}\text{,} & \text{otherwise,}%
\end{array}%
\right.
\end{equation}%
where $\alpha \neq 0$, $1$,%
\begin{equation*}
{\small \varepsilon =}\left\{ 
\begin{array}{ll}
+1\text{,} & \text{if }n\equiv 0,4,7,11~(14)\text{ \ }\  \\ 
-1\text{,} & \text{if }n\equiv 2,3,6,9,10,13~(14)\text{,}%
\end{array}%
\right.
\end{equation*}%
and%
\begin{equation*}
a=\left\{ 
\begin{array}{cc}
0\text{,} & \text{if }n\equiv 0~(7) \\ 
-3\text{,} & \text{if }n\equiv 2~(7) \\ 
2\text{,} & \text{if }n\equiv 3~(7) \\ 
-5\text{,} & \text{if }n\equiv 4~(7) \\ 
1\text{,} & \text{if }n\equiv 6~(7)\text{,}%
\end{array}%
\right. b=\left\{ 
\begin{array}{cc}
0\text{,} & \text{if }n\equiv 0~(7) \\ 
1\text{,} & \text{if }n\equiv 2~(7) \\ 
4\text{,} & \text{if }n\equiv 3,4~(7) \\ 
2\text{,} & \text{if }n\equiv 6~(7)\text{.}%
\end{array}%
\right.
\end{equation*}%
\newline
5. If $N=9$, then%
\begin{equation}
{\small H}_{n}{\small \ =}\left\{ 
\begin{array}{cc}
{\small 0}\text{,} & \text{if }{\small n}\text{ }{\small \equiv 1,7~(9)} \\ 
{\small \varepsilon \alpha }^{\{(7n^{2}+a)/3\}}{\small (\alpha -1)}%
^{\{(4n^{2}-b)/3\}}\eta ^{(n^{2}+c)}\text{,} & \text{otherwise}%
\end{array}%
\right.
\end{equation}%
where $\alpha \neq 0$, $1$, $\eta =\alpha ^{2}-\alpha +1$, 
\begin{equation*}
\varepsilon =\left\{ 
\begin{array}{ll}
+1\text{,} & \text{if }n\equiv 0,4,5,8,11,12,15~(18)\text{ \ \ }\  \\ 
-1\text{,} & \text{if }n\equiv 2,3,6,9,13,14,17~(18)\text{,}%
\end{array}%
\right.
\end{equation*}%
and%
\begin{equation*}
a=\left\{ 
\begin{array}{ll}
{\small 0}\text{,} & \text{if }{\small n}\text{ }{\small \equiv 0,3~(9)} \\ 
{\small 2}\text{,} & \text{if }{\small n}\text{ }{\small \equiv 2,5~(9)} \\ 
-{\small 1}\text{,} & \text{if }{\small n}\text{ }{\small \equiv 4,8~(9)} \\ 
{\small 3}\text{,} & \text{if }{\small n}\text{ }{\small \equiv 6~(9)}\text{%
{\small ,}}%
\end{array}%
\right. b=\left\{ 
\begin{array}{ll}
{\small 0}\text{,} & \text{if }{\small n}\text{ }{\small \equiv 0~(9)} \\ 
{\small 1}\text{,} & \text{if }{\small n}\text{ }{\small \equiv 2,8~(9)} \\ 
{\small 3}\text{,} & \text{if }{\small n}\text{ }{\small \equiv 3,6~(9)} \\ 
{\small 4}\text{,} & \text{if }{\small n}\text{ }{\small \equiv 4,5~(9)}%
\text{,}%
\end{array}%
\right. c=\left\{ 
\begin{array}{ll}
{\small 1}\text{,} & \text{if }{\small n}\text{ }{\small \equiv 4~(9)} \\ 
{\small 0}\text{,} & \text{{\small otherwise.}}%
\end{array}%
\right.
\end{equation*}%
\newline
6. If $N=10$, then\newline
\begin{equation}
{\small H}_{n}~{\small =}\left\{ 
\begin{array}{cc}
{\small 0}\text{,} & \text{if }{\small n}\text{ }{\small \equiv 1,8~(10)} \\ 
\begin{array}{c}
{\small \varepsilon \alpha }^{\{(63n^{2}+a)/20\}}{\small (\alpha -1)}%
^{\{(27n^{2}-b)/20\}} \\ 
\times {\small (2\alpha -1)}^{\{(6n^{2}+c)/5\}}{\small \zeta }%
^{\{(15n^{2}+d)/4\}}\text{,}%
\end{array}
& \text{otherwise,}%
\end{array}%
\right.
\end{equation}%
where $\alpha \neq 0$, $1$,%
\begin{equation*}
{\small \varepsilon =}\left\{ 
\begin{array}{ll}
+1\text{,} & \text{if }n\text{ }\equiv 0,4,5,9,10,13,14,17~(20)\text{ \ \ }\ 
\\ 
-1\text{,} & \text{if }n\text{ }\equiv 2,3,6,7,12,15,16,19~(20)\text{,}%
\end{array}%
\right.
\end{equation*}%
and%
\begin{eqnarray*}
a &=&\left\{ 
\begin{array}{cc}
0\text{,} & \text{if }n\equiv 0~(10) \\ 
8\text{,} & \text{if }n\equiv 2~(10) \\ 
-7\text{,} & \text{if }n\equiv 3~(10) \\ 
32\text{,} & \text{if }n\equiv 4~(10) \\ 
25\text{,} & \text{if }n\equiv 5~(10) \\ 
-8\text{,} & \text{if }n\equiv 6~(10) \\ 
13\text{,} & \text{if }n\equiv 7~(10) \\ 
-3\text{,} & \text{if }n\equiv 9~(10)\text{,}%
\end{array}%
\right. \text{ \ \ \ \ }b=\left\{ 
\begin{array}{cc}
0\text{,} & \text{if }n\equiv 0~(10)\text{,} \\ 
8\text{,} & \text{if }n\equiv 2~(10) \\ 
23\text{,} & \text{if }n\equiv 3\text{, }7~(10) \\ 
32\text{,} & \text{if }n\equiv 4\text{, }6~(10) \\ 
35\text{,} & \text{if }n\equiv 5~(10) \\ 
7\text{,} & \text{if }n\equiv 9~(10)\text{,}%
\end{array}%
\right. \\
c &=&\left\{ 
\begin{array}{cc}
0\text{,} & \text{if }n\equiv 0,5~(10) \\ 
1\text{,} & \text{if }n\equiv 2,3,7~(10) \\ 
-1\text{,} & \text{if }n\equiv 4,9~(10) \\ 
4\text{,} & \text{if }n\equiv 6~(10)\text{,}%
\end{array}%
\right. d=\left\{ 
\begin{array}{ll}
0\text{,} & \text{if }n\equiv 0,2,4,6~(10) \\ 
1\text{,} & \text{otherwise.}%
\end{array}%
\right.
\end{eqnarray*}%
\newline
7. If $N=12$, then%
\begin{equation}
{\small H}_{n}~{\small =}\left\{ 
\begin{array}{cc}
{\small 0}\text{,} & \text{if }{\small n}\text{ }{\small \equiv 1,10~(12)}
\\ 
\begin{array}{c}
{\small \varepsilon \alpha }^{\{(n^{2}-a)/4\}}{\small (\alpha -1)}%
^{\{(59n^{2}+b)/8\}} \\ 
\times {\small (2\alpha -1)}^{\{(n^{2}-c)/8\}}{\small \lambda }^{\{(9n^{2}-%
{\small d})/8\}}{\small \theta }^{(n^{2}+e)}\text{,}%
\end{array}
& \text{otherwise,}%
\end{array}%
\right.  \label{20}
\end{equation}%
where $\alpha \neq 0$, $1$,%
\begin{equation*}
\varepsilon =\left\{ 
\begin{array}{ll}
+1\text{,} & \text{if }n\equiv 0,3,5,7,8,11,14,16,18,21~(24)\text{ \ \ }\ 
\\ 
-1\text{,} & \text{if }n\equiv 2,4,6,9,12,15,17,19,20,23~(24)\text{,}%
\end{array}%
\right.
\end{equation*}%
and%
\begin{eqnarray*}
a &=&\left\{ 
\begin{array}{cc}
0\text{,} & \text{if }n\equiv 0~(12) \\ 
4\text{,} & \text{if }n\equiv 2~(12) \\ 
5\text{,} & \text{if }n\equiv 3~(12) \\ 
8\text{,} & \text{if }n\equiv 4,8~(12) \\ 
13\text{,} & \text{if }n\equiv 5~(12) \\ 
12\text{,} & \text{if }n\equiv 6~(12) \\ 
9\text{,} & \text{if }n\equiv 7,9~(12) \\ 
1\text{,} & \text{if }n\equiv 11~(12)\text{,}%
\end{array}%
\right. \text{ \ \ \ }b=\left\{ 
\begin{array}{cc}
0\text{,} & \text{if }n\equiv 0,4~(12) \\ 
-4\text{,} & \text{if }n\equiv 2~(12) \\ 
13\text{,} & \text{if }n\equiv 3~(12) \\ 
5\text{,} & \text{if }n\equiv 5,11~(12) \\ 
12\text{,} & \text{if }n\equiv 6~(12) \\ 
-3\text{,} & \text{if }n\equiv 7,9~(12) \\ 
16\text{,} & \text{if }n\equiv 8~(12)\text{,}%
\end{array}%
\right. \\
c &=&\left\{ 
\begin{array}{cc}
0\text{,} & \text{if }n\equiv 0,4,8~(12) \\ 
4\text{,} & \text{if }n\equiv 2,6~(12) \\ 
1\text{,} & \text{if }n\equiv 3,11~(12) \\ 
9\text{,} & \text{if }n\equiv 5,9~(12) \\ 
-7\text{,} & \text{if }n\equiv 7~(12)\text{,}%
\end{array}%
\right. d=\left\{ 
\begin{array}{cc}
0\text{,} & \text{if }n\equiv 0,4,8~(12) \\ 
-4\text{,} & \text{if }n\equiv 2,6~(12) \\ 
1\text{,} & \text{if }n\equiv 3,7,11~(12) \\ 
-7\text{,} & \text{if }n\equiv 5,9~(12)\text{,}%
\end{array}%
\right.
\end{eqnarray*}%
and 
\begin{equation*}
e=\left\{ 
\begin{array}{ll}
1\text{,} & \text{if }n\equiv 4,7~(12) \\ 
0\text{,} & \text{otherwise.}%
\end{array}%
\right. \newline
\end{equation*}
\end{theorem}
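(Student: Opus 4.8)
The plan is to follow the method of Theorem 4.3, now applied to the $y$-coordinate recursion (\ref{22}). As there, normalize $\gamma=1$, so that by (\ref{a1}) and (\ref{a2}) the sequence $(F_n)_{n\ge0}$ coincides with the elliptic divisibility sequence $(h_n)_{n\ge0}$ whose closed form for each relevant $N$ is recorded in \cite[Theorem 3.2]{BG}, while $G_n=\phi_n$ and $H_n=\omega_n$ are the unnormalized division polynomials evaluated at $P=(0,0)$. First I would list, for each $N\in\{4,5,6,7,9,10,12\}$, the Weierstrass coefficients of the integral model in use (for $N=10$ and $12$ the curve $E_N'$): for the plain Tate form $a_1=1-c$, $a_3=-b$, $a_4=a_6=0$, and the analogous constants for $E_{10}'$ and $E_{12}'$, all explicit in $\alpha$ through (\ref{be}) and (\ref{la}). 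In particular $F_2=\psi_2(P)=a_3$, which is nonzero for every such $N$, so the division by $2F_2$ in (\ref{22}) is legitimate.

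The main computation rests on the identity (\ref{22}) with $\gamma=1$,
\begin{equation*}
H_n=\bigl(F_{n-1}^2F_{n+2}-F_{n-2}F_{n+1}^2-F_2F_n(a_1G_n+a_3F_n)\bigr)(2F_2)^{-1},
\end{equation*}
into which I would substitute the closed form of $(F_n)$ from \cite[Theorem 3.2]{BG} together with the closed form of $(G_n)$ proved in the preceding theorem (equivalently, the relation $G_n=-F_{n+1}F_{n-1}$, which holds for all $n$ because $P$ has $x$-coordinate $0$). Fixing $N$ and writing $n\equiv r\pmod{N}$, each of $F_{n\pm1}$, $F_{n\pm2}$, $F_n$ and $G_n$ becomes a signed monomial in the parameters of $E_N$, with exponent an explicit quadratic in $n$ plus one of the small correction terms tabulated in the statement; multiplying out, collecting, and cancelling $2F_2$ produces a single signed monomial, which one checks term by term against the asserted expression for $H_n$. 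Since the closed forms in \cite[Theorem 3.2]{BG} carry a sign $\varepsilon$ that depends on $n$ modulo $2N$, the case analysis must be carried out to that finer modulus in order to reproduce the sign $\varepsilon$ in the statement, whereas the exponents feel $n$ only modulo $N$ apart from those corrections. A convenient cross-check, and an alternative to (\ref{22}) whenever $N\nmid n$, is the relation $H_n=\omega_n(P)=y([n]P)\,F_n^{3}$, valid because $F_n\ne0$ in that case: here $y([n]P)=y([r]P)$ is read off from the finitely many multiples $[1]P,\dots,[N-1]P$ on $E_N$ and is itself a signed monomial in the parameters.

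The vanishing cases are treated separately. For $N\nmid n$ one has $F_n\ne0$, so $H_n=y([r]P)F_n^{3}$ vanishes precisely when $[r]P$ has zero $y$-coordinate; computing the small multiples of $P$ (for instance on the $N=5$ curve one finds $[1]P=(0,0)$, $[2]P=(\alpha,\alpha^2)$, $[3]P=(\alpha,0)$, $[4]P=(0,\alpha)$) singles out exactly the residues $r$ for which $H_n=0$, matching the excluded classes in the statement. For $N\mid n$ one has $[n]P=\mathcal{O}$ and $F_n=0$, so this shortcut degenerates; here one returns to (\ref{22}), in which the term $F_2F_n(a_1G_n+a_3F_n)$ drops out, leaving $H_n=\bigl(F_{n-1}^2F_{n+2}-F_{n-2}F_{n+1}^2\bigr)(2F_2)^{-1}$, which one evaluates directly from the closed form of $(F_n)$ to the stated monomial.

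I expect the main obstacle to be one of organization and stamina rather than of ideas: the verification runs through seven values of $N$, each with several residue classes modulo $2N$, and for $N=12$ the monomials already involve the five pairwise coprime factors $\alpha$, $\alpha-1$, $2\alpha-1$, $\lambda$, $\theta$ whose exponents are quadratics in $n$ with piecewise-defined corrections; keeping the signs $\varepsilon$ and the exponents consistent across all cases, and correctly accounting for the sign introduced by negation in the group law when $[n]P=[r]P$ is expressed through a third point of intersection, is where care is needed. The passage from $E_{10},E_{12}$ to the integral models $E_{10}',E_{12}'$ adds further bookkeeping, since the coordinates of $P$ and of its multiples are rescaled, but that rescaling is uniform and gets absorbed into the monomials.
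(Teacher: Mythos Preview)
Your proposal is correct and follows essentially the same approach as the paper: the paper's own proof is simply the remark that ``the proofs are similar to the proof of Theorem 4.3,'' i.e., substitute the closed forms of $(F_n)$ from \cite[Theorem 3.2]{BG} into the recursion (\ref{22}) (with $\gamma=1$ and $x=0$) and verify residue class by residue class modulo $2N$. Your added shortcut $H_n=y([r]P)\,F_n^{3}$ for $N\nmid n$ is not explicitly stated in the paper but is a legitimate and efficient alternative to the direct use of (\ref{22}) in those cases.
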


\section{\protect\appendix{}}

In the following theorems we determine square and cube terms in the
sequences $(G_{n})_{n\geq 0}$ and $(H_{n})_{n\geq 0}$ associated to an
elliptic curve in Tate normal form with a torsion point $P=(0,0)$ of order $%
N $. The proofs are similar to the proof of Theorem 5.1.

\begin{theorem}
Let $E_{N}$ be a Tate normal form of an elliptic curve with a torsion point $%
P=(0,0)$ of order $N$. Let $(G_{n})_{n\geq 0}$ be the sequence generated by
the numerators of the $x$-coordinates of the multiples of $P$ as in (\ref{a}%
), and let $G_{n}$ $\neq 0$.\newline
{}1. Let $N=4$. \newline
\hspace*{0.35cm}$(i)$ $\bullet $ If $n\equiv 0~(4)$, then $G_{n}=\square $
for all non-zero $\alpha $, \newline
\hspace*{0.25cm} \ $\ \ \ \bullet $ otherwise $G_{n}=\square $ iff $\alpha $ 
$=\square $. \newline
\hspace*{0.25cm}$(ii)$ $\bullet $ $G_{n}=C$ for all non-zero $\alpha $.
\quad \newline
2. Let $N=5$. \newline
\hspace*{0.35cm}$(i)$ $\bullet $ If $n\equiv 0~(5)$, then $G_{n}=\square $
for all non-zero $\alpha $, \newline
\hspace*{0.25cm} \ $\ \ \ \bullet $ otherwise $G_{n}=\square $ iff $\alpha $ 
$=\square $. \newline
\hspace*{0.25cm}$(ii)$ $\bullet $ If $n\equiv 0$, $2$, $7$, $8$, $13~(15)$,
then $G_{n}=C$ for all non-zero $\alpha $, \\[0.02cm]
\hspace*{0.35cm} $\ \ \ \bullet $ otherwise $G_{n}=C$ iff $\alpha $ $=C$%
.\quad \newline
3. Let $N=6$. \newline
\hspace*{0.35cm}$(i)$ $\bullet $ If $n\equiv 0~(6)$, then $G_{n}=\square $
for all $\alpha $ $\neq -1$, $0$, \newline
\hspace*{0.25cm} \ $\ \ \ \bullet $ if $n\equiv 3~(6)$, then $G_{n}=\square $
iff $\alpha $ $=\square $, \\[0.02cm]
\hspace*{0.35cm} $\ \ \ \bullet $ otherwise $G_{n}\neq \square $ for all $%
\alpha $ $\neq -1$, $0$. \newline
\hspace*{0.25cm}$(ii)$ $\bullet $ If $n\equiv 0$, $2$, $6$, $12$, $16~(18)$,
then $G_{n}=C$ for all $\alpha $ $\neq -1$, $0$, \quad\ \newline
\hspace*{0.25cm} \ $\ \ \ \bullet $ if $n\equiv 3$, $9$, $15~(18)$, then $%
G_{n}=C$ iff $\alpha =C$, \\[0.02cm]
\hspace*{0.35cm} $\ \ \ \bullet $ otherwise $G_{n}\neq C$ for all $\alpha $ $%
\neq -1$, $0$. \newline
4. Let $N=7$. \newline
\hspace*{0.35cm}$(i)$ $\bullet $ If $n\equiv 0~(7)$, then $G_{n}=\square $
for all $\alpha $ $\neq 0$, $1$, \newline
\hspace*{0.25cm} \ $\ \ \ \bullet $ if $n\equiv 2$, $5~(7)$, then $%
G_{n}=\square $ iff $\alpha -1$ $=\square $, \\[0.02cm]
\hspace*{0.35cm} $\ \ \ \bullet $ otherwise $G_{n}\neq \square $ for all $%
\alpha $ $\neq 0$, $1$. \newline
\hspace*{0.25cm}$(ii)$ $\bullet $ If $n\equiv 0$, $2$, $5$, $16$, $19~(21)$,
then $G_{n}=C$ for all $\alpha $ $\neq 0$, $1$, \\[0.02cm]
\hspace*{0.35cm} $\ \ \ \bullet $ if $n\equiv 7$, $9$, $12$, $14~(21)$, then 
$G_{n}=C$ iff $\alpha =C$, \\[0.02cm]
\hspace*{0.35cm} $\ \ \ \bullet $ otherwise $G_{n}\neq C$ for all $\alpha $ $%
\neq 0$, $1$. \newline
5. Let $N=9$. \newline
\hspace*{0.35cm}$(i)$ $\bullet $ If $n\equiv 0~(9)$, then $G_{n}=\square $
for all $\alpha $ $\neq 0$, $1$, \newline
\hspace*{0.25cm} \ $\ \ \ \bullet $ if $n\equiv 3$, $6~(9)$, then $%
G_{n}=\square $ iff $\alpha -1$ $=\square $, \\[0.02cm]
\hspace*{0.35cm} $\ \ \ \bullet $ otherwise $G_{n}\neq \square $ for all $%
\alpha $ $\neq 0$, $1$. \newline
\hspace*{0.25cm}$(ii)$ $\bullet $ If $n\equiv 0$, $2$, $9$, $18$, $25~(27)$,
then $G_{n}=C$ for all $\alpha $ $\neq 0$, $1$, \\[0.02cm]
\hspace*{0.35cm} $\ \ \ \bullet $ if $n\equiv 5$, $22~(27)$, then $G_{n}=C$
iff $\alpha ^{2}-\alpha +1=C$,\quad\ \\[0.02cm]
\hspace*{0.35cm} $\ \ \ \bullet $ otherwise $G_{n}\neq C$ for all $\alpha $ $%
\neq 0$, $1$. \newline
6. Let $N=10$. \newline
\hspace*{0.35cm}$(i)$ $\bullet $ \textit{If }$n\equiv 0~(10)$\textit{, then }%
$G_{n}=\square $\textit{\ for all }$\alpha $\textit{\ }$\neq 0$\textit{, }$1$%
, \newline
\hspace*{0.25cm} \ $\ \ \ \bullet $ \textit{if }$n\equiv 4$, $6~(10)$\textit{%
, then }$G_{n}=\square $\textit{\ iff }$(\alpha -1)(2\alpha -1)$\textit{\ }$%
=\square $\textit{,} \\[0.02cm]
\hspace*{0.35cm} $\ \ \ \bullet $ otherwise $G_{n}\neq \square $ for all $%
\alpha $ $\neq 0$, $1$. \newline
\hspace*{0.25cm}$(ii)$ $\bullet $ \textit{If }$n\equiv 0~(30)$\textit{, then 
}$G_{n}=C$\textit{\ for all }$\alpha $\textit{\ }$\neq 0$\textit{, }$1$, \\[%
0.02cm]
\hspace*{0.35cm} $\ \ \ \bullet $ \textit{if }$n\equiv 2$, $8$, $22$, $%
28~(30)$\textit{, then }$G_{n}=C$\textit{\ iff }$\alpha ^{2}-3\alpha +1=C$%
,\quad\ \newline
\hspace*{0.25cm} \ $\ \ \ \bullet $ \textit{if }$n\equiv 12$, $18~(30)$%
\textit{, then }$G_{n}=C$\textit{\ iff }$2\alpha -1=C$, \\[0.02cm]
\hspace*{0.35cm} $\ \ \ \bullet $ otherwise $G_{n}\neq C$ for all $\alpha $ $%
\neq 0$, $1$. \newline
7. Let $N=12$. \newline
\hspace*{0.35cm}$(i)$ $\bullet $ If $n\equiv 0~(12)$, then $G_{n}=\square $
for all $\alpha $ $\neq 0$, $1$, \\[0.02cm]
\hspace*{0.35cm} $\ \ \ \bullet $ otherwise $G_{n}\neq \square $ for all $%
\alpha $ $\neq 0$, $1$. \newline
\hspace*{0.25cm}$(ii)$ $\bullet $ If $n\equiv 0$, $12$, $24$ $(36)$, then $%
G_{n}=C$ for all $\alpha $ $\neq 0$, $1$, \\[0.02cm]
\hspace*{0.35cm} $\ \ \ \bullet $ if $n\equiv 16$, $20~(36)$, then $G_{n}=C$
iff $\alpha =C$,\quad\ \\[0.02cm]
\hspace*{0.35cm} $\ \ \ \bullet $ if $n\equiv 2$, $34~(36)$, then $G_{n}=C$
iff $\alpha -1=C$, \quad\ \\[0.02cm]
\hspace*{0.35cm} $\ \ \ \bullet $ otherwise $G_{n}\neq C$ for all $\alpha $ $%
\neq 0$, $1$.
\end{theorem}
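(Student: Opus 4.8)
The plan is to run, for each remaining order $N\in\{4,5,6,7,9,10,12\}$, exactly the argument of Theorem~5.1, now feeding in the general term of $(G_n)_{n\ge 0}$ given in Appendix~A (formulas~(\ref{5}), (\ref{7}), (\ref{9}), (\ref{11}), (\ref{15}), (\ref{17}), (\ref{19})). Outside the residue classes for which $G_n=0$, each of these formulas presents $G_n$ as $\pm$ a monomial in $\alpha$, $\alpha-1$, $2\alpha-1$ and the auxiliary parameters $\zeta$, $\eta$, $\lambda$, $\theta$, the exponents being the explicit quadratic-in-$n$ expressions of Appendix~A, which depend only on $n$ modulo a small modulus. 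First I would expand the composite parameters into their irreducible factors --- $\zeta=-(\alpha^2-3\alpha+1)$, $\eta=\alpha^2-\alpha+1$, $\lambda=-\alpha(2\alpha-1)(3\alpha^2-3\alpha+1)$, $\theta=-(2\alpha^2-2\alpha+1)$ --- and collect exponents, so that $|G_n|$ becomes a monomial in the irreducible polynomials occurring among $\alpha$, $\alpha-1$, $2\alpha-1$, $\alpha^2-3\alpha+1$, $\alpha^2-\alpha+1$, $2\alpha^2-2\alpha+1$, $3\alpha^2-3\alpha+1$. Reducing every exponent modulo $2$ (for the square question) and modulo $3$ (for the cube question) then leaves, for each residue class of $n$, a condition of the form ``a product of a sub-multiset of these polynomials is $\square$'' (resp.\ $=C$); when all the reduced exponents vanish, $G_n$ is automatically a square (resp.\ cube) for every admissible $\alpha$, which yields the ``for all $\alpha$'' clauses of the statement.

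Next comes the coprimality reduction. As on p.~498 of \cite{BG} --- where the factors of $\alpha(\alpha-1)(2\alpha-1)(2\alpha^2-2\alpha+1)(3\alpha^2-3\alpha+1)$ are shown to be pairwise relatively prime --- the irreducible polynomials appearing in any one of our products are pairwise coprime, so the product is $\square$ (resp.\ $C$) if and only if each of its factors is. As in the case $N=8$ it then suffices to retain the strongest single-factor condition: the conditions $\alpha=\square$, $\alpha-1=\square$, $2\alpha-1=\square$, $\alpha=C$, $\alpha-1=C$, $2\alpha-1=C$, $\eta=C$, $\zeta=C$ each hold for suitable admissible integers $\alpha$, and they give the ``iff'' clauses of the theorem exactly as listed (for $N=9$ the exotic condition is $\alpha^2-\alpha+1=C$; for $N=10$ it is $\alpha^2-3\alpha+1=C$ together with $2\alpha-1=C$).

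The remaining work concerns the residue classes in which two or more \emph{distinct} irreducible factors are simultaneously forced to be $\square$ (resp.\ $C$); here I would proceed case by case as in Theorem~5.1. Pairs such as $\alpha(\alpha-1)=\square$ or $\alpha(\alpha+1)=\square$ collapse to the degenerate equations $(2\alpha-1)^2\pm\beta^2=1$, resp.\ $(2\alpha+1)^2-4\beta^2=1$, which have no admissible $\alpha$; $\alpha(\alpha-1)=C$ reduces to the consecutive-cube equation $\beta_1^3-\beta_2^3=1$, again with none; the pair $(\alpha-1)(2\alpha-1)=\square$ (which occurs for $N=10$ as it did for $N=8$) reduces to the Pell equation $\tau^2-8\beta^2=1$ with $\tau=4\alpha-3$, contributing infinitely many admissible $\alpha$; and cubic pairings such as $\alpha(2\alpha-1)=C$ reduce to the classical equation $2\beta_1^3-\beta_2^3=1$, which by Theorem~5, Chapter~24 of \cite{MOR} has no solution with $\beta_1\beta_2\neq0$. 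Whenever a condition takes the shape of a Mordell equation $y^2=x^3+k$ I would read off the full list of integral points from the tables of \cite{AP2}, and the handful of remaining elliptic equations I would settle by the elliptic logarithm method as implemented in \cite{MAG}.

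The chief obstacle is the sheer volume of bookkeeping rather than any new idea. For $N=10$ and $N=12$ one must pass through many residue classes --- for the cube question modulo $3N$, i.e.\ modulo $30$ and $36$ respectively, since the mod-$3$ behaviour of the quadratic exponents in Appendix~A does not stabilise before that modulus --- and for each class carry out the mod-$2$ and mod-$3$ reduction of up to five exponents, checking that the surviving factor-product is precisely the one named in the statement. The only points of genuine Diophantine substance are confirming the solvability of the new conditions for $N=9$ and $N=10$ (the cubes $\eta=C$, $\zeta=C$ and $2\alpha-1=C$, together with the Pell-type $(\alpha-1)(2\alpha-1)=\square$) and the unsolvability of the mixed cubic conditions for $N=12$; all of these yield to the same three tools already used for $N=8$ --- Pell equations, the Mordell tables of \cite{AP2}, and the elliptic logarithm method --- so no technique beyond those of Theorem~5.1 is required.
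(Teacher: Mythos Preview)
Your proposal is correct and follows exactly the approach the paper takes: the paper's own proof for this statement consists solely of the remark that ``the proofs are similar to the proof of Theorem~5.1,'' and what you have written is precisely a detailed expansion of that template---feeding the Appendix~A formulas for $G_n$ into the mod-$2$/mod-$3$ exponent reduction, invoking the pairwise-coprimality observation from \cite{BG}, and disposing of the residual Diophantine conditions via the same Pell, Mordell-table, and elliptic-logarithm tools used in the $N=8$ case.
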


\begin{theorem}
Let $E_{N}$ be a Tate normal form of an elliptic curve with a torsion point $%
P=(0,0)$ of order $N$. Let $(H_{n})_{n\geq 0}$ be the sequence generated by
the numerators of the $y$-coordinates of the multiples of $P$ as in (\ref{a}%
), and let $H_{n}$ $\neq 0$.\newline
{}1. Let $N=4$. \newline
\hspace*{0.35cm}$(i)$ $\bullet $ If $n\equiv 0$, $3$, $4~(8)$, then $%
H_{n}=\square $ for all non-zero $\alpha $, \newline
\hspace*{0.25cm} \ $\ \ \ \bullet $ otherwise $H_{n}=\square $ iff $\alpha $ 
$=\square $. \newline
\hspace*{0.25cm}$(ii)$ $\bullet $ If $n\equiv 0~(4)$, then $H_{n}=C$ for all
non-zero $\alpha $, \\[0.02cm]
\hspace*{0.35cm} $\ \ \ \bullet $ otherwise $H_{n}=C$ iff $\alpha $ $=C$%
.\quad \newline
2. Let $N=5$. \newline
\hspace*{0.35cm}$(i)$ $\bullet $ If $n\equiv 0~(5)$, then $H_{n}=\square $
for all non-zero $\alpha $, \newline
\hspace*{0.25cm} \ $\ \ \ \bullet $ otherwise $H_{n}=\square $ iff $\alpha $ 
$=\square $. \newline
\hspace*{0.25cm}$(ii)$ $\bullet $ If $n\equiv 0~(5)$, then $H_{n}=C$ for all
non-zero $\alpha $, \\[0.02cm]
\hspace*{0.35cm} $\ \ \ \bullet $ otherwise $H_{n}=C$ iff $\alpha $ $=C$. 
\newline
3. Let $N=6$. \newline
\hspace*{0.35cm}$(i)$ $\bullet $ If $n\equiv 0$, $8~(12)$, then $%
H_{n}=\square $ for all $\alpha $ $\neq -1$, $0$, \newline
\hspace*{0.25cm} \ $\ \ \ \bullet $ if $n\equiv 2$, $6~(12)$, then $%
H_{n}=\square $ iff $\alpha $ $=\square $, \\[0.02cm]
\hspace*{0.35cm} $\ \ \ \bullet $ otherwise $H_{n}\neq \square $ for all $%
\alpha $ $\neq -1$, $0$. \newline
\hspace*{0.25cm}$(ii)$ $\bullet $ If $n\equiv 0~(6)$, then $H_{n}=C$ for all 
$\alpha $ $\neq -1$, $0$, \quad\ \newline
\hspace*{0.25cm} \ $\ \ \ \bullet $ if $n\equiv 3~(6)$, then $H_{n}=C$ iff $%
\alpha =C$, \\[0.02cm]
\hspace*{0.35cm} $\ \ \ \bullet $ otherwise $H_{n}\neq C$ for all $\alpha $ $%
\neq -1$, $0$. \newline
4. Let $N=7$. \newline
\hspace*{0.35cm}$(i)$ $\bullet $ If $n\equiv 0$, $9$, $10~(14)$, then $%
H_{n}=\square $ for all $\alpha $ $\neq 0$, $1$, \newline
\hspace*{0.25cm} \ $\ \ \ \bullet $ if $n\equiv 4$, $6~(14)$, then $%
H_{n}=\square $ iff $\alpha $ $=\square $, \\[0.02cm]
\hspace*{0.25cm} \ $\ \ \ \bullet $ if $n\equiv 11$, $13~(14)$, then $%
H_{n}=\square $ iff $\alpha -1$ $=\square $, \\[0.02cm]
\hspace*{0.35cm} $\ \ \ \bullet $ otherwise $H_{n}\neq \square $ for all $%
\alpha $ $\neq 0$, $1$. \newline
\hspace*{0.25cm}$(ii)$ $\bullet $ If $n\equiv 0$ $(7)$, then $H_{n}=C$ for
all $\alpha $ $\neq 0$, $1$, \\[0.02cm]
\hspace*{0.35cm} $\ \ \ \bullet $ if $n\equiv 2~(7)$, then $H_{n}=C$ iff $%
\alpha -1=C$, \\[0.02cm]
\hspace*{0.35cm} $\ \ \ \bullet $ otherwise $H_{n}\neq C$ for all $\alpha $ $%
\neq 0$, $1$. \newline
5. Let $N=9$. \newline
\hspace*{0.35cm}$(i)$ $\bullet $ If $n\equiv 0$, $13$, $14~(18)$, then $%
H_{n}=\square $ for all $\alpha $ $\neq 0$, $1$, \newline
\hspace*{0.25cm} \ $\ \ \ \bullet $ if $n\equiv 2$, $12~(18)$, then $%
H_{n}=\square $ iff $\alpha -1$ $=\square $, \\[0.02cm]
\hspace*{0.35cm} $\ \ \ \bullet $ otherwise $H_{n}\neq \square $ for all $%
\alpha $ $\neq 0$, $1$. \newline
\hspace*{0.25cm}$(ii)$ $\bullet $ If $n\equiv 0$ $(9)$, then $H_{n}=C$ for
all $\alpha $ $\neq 0$, $1$, \\[0.02cm]
\hspace*{0.35cm} $\ \ \ \bullet $ if $n\equiv 3$ $(9)$, then $H_{n}=C$ iff $%
\alpha -1=C$,\quad\ \\[0.02cm]
\hspace*{0.35cm} $\ \ \ \bullet $ otherwise $H_{n}\neq C$ for all $\alpha $ $%
\neq 0$, $1$. \newline
6. Let $N=10$. \newline
\hspace*{0.25cm} $(i)$ $\bullet $ If $n\equiv 0$, $5$, $15$, $16~(20)$, then 
$H_{n}=\square $ for all $\alpha $ $\neq 0$, $1$, \newline
\hspace*{0.25cm} \ $\ \ \ \bullet $ if $n\equiv 4$, $12~(20)$, then $%
H_{n}=\square $ iff $2\alpha -1$ $=\square $, \\[0.02cm]
\hspace*{0.35cm} $\ \ \ \bullet $ if $n\equiv 3$, $13~(20)$, then $%
H_{n}=\square $ iff $(\alpha -1)(2\alpha -1)$ $=\square $, \\[0.02cm]
\hspace*{0.35cm} $\ \ \ \bullet $ otherwise $H_{n}\neq \square $ for all $%
\alpha $ $\neq 0$, $1$. \\[0.02cm]
\hspace*{0.25cm}$(ii)$ $\bullet $ \textit{If }$n\equiv 0~(10)$\textit{, then 
}$H_{n}=C$\textit{\ for all }$\alpha $\textit{\ }$\neq 0$\textit{, }$1$,%
\hspace*{0.35cm} $\ $\\[0.02cm]
\hspace*{0.35cm} $\ \ \ \bullet $ otherwise $H_{n}\neq C$ for all $\alpha $ $%
\neq 0$, $1$. \newline
7. Let $N=12$. \newline
\hspace*{0.35cm}$(i)$ $\bullet $ If $n\equiv 0$, $8$, $12$, $19$, $20~(24)$,
then $H_{n}=\square $ for all $\alpha $ $\neq 0$, $1$, \newline
\hspace*{0.25cm} \ $\ \ \ \bullet $ if $n\equiv 4$, $16~(24)$, then $%
H_{n}=\square $ iff $2\alpha -2\alpha ^{2}-1$ $=\square $, \\[0.02cm]
\hspace*{0.35cm} $\ \ \ \bullet $ otherwise $H_{n}\neq \square $ for all $%
\alpha $ $\neq 0$, $1$. \newline
\hspace*{0.25cm}$(ii)$ $\bullet $ \textit{If }$n\equiv 0~(12)$\textit{, then 
}$H_{n}=C$\textit{\ for all }$\alpha $\textit{\ }$\neq 0$\textit{, }$1$,%
\hspace*{0.35cm} $\ $\\[0.02cm]
\hspace*{0.35cm} $\ \ \ \bullet $ otherwise $H_{n}\neq C$ for all $\alpha $ $%
\neq 0$, $1$.
\end{theorem}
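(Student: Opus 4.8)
The plan is to run, for each remaining order $N\in\{4,5,6,7,9,10,12\}$, the argument used to prove Theorem~5.1, with the $N=8$ formula of Theorem~4.3 replaced by the explicit general term of $(H_n)_{n\ge 0}$ recorded in Appendix~A. On each admissible residue class of $n$ that formula presents $H_n$, up to a sign which is irrelevant because $\square=\pm\beta^{2}$ and a perfect cube may be negative, as a monomial in the polynomials occurring among $\alpha$, $\alpha-1$, $\alpha+1$, $2\alpha-1$, $\alpha^{2}-\alpha+1$, $\alpha^{2}-3\alpha+1$, $2\alpha^{2}-2\alpha+1$, $3\alpha^{2}-3\alpha+1$, each raised to an exponent that is a polynomial in $n$ on that class. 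The first step is to record that these polynomials are pairwise relatively prime, so their values at an admissible integer $\alpha$ are pairwise coprime up to a harmless bounded factor; this is the coprimality observation already used at the start of Section~5 and in \cite{BG}.

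Granting this, $H_n=\square$ holds if and only if every irreducible factor occurring to an odd exponent is $\pm$ a perfect square, and $H_n=C$ holds if and only if every irreducible factor occurring to an exponent $\not\equiv 0\pmod 3$ is a perfect cube (an exponent $\equiv 2\pmod 3$ being absorbed since $\gcd(2,3)=1$). The bulk of the proof is then bookkeeping: for each $N$ and each residue class one reduces each exponent modulo $2$ (resp.\ modulo $3$), which depends only on $n$ modulo a fixed small modulus, and reads off which factors carry a ``bad'' exponent. Classes with no bad factor give the unconditional ``$H_n=\square$ (or $=C$) for all $\alpha$'' assertions; classes whose only bad factor is a single linear form yield the criteria ``iff $\alpha=\square$'', ``iff $2\alpha-1=\square$'', ``iff $\alpha-1=C$'', and the like, which need no further analysis since $\alpha$ ranges over all integers.

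The remaining classes lead to Diophantine equations, handled by the same catalogue as in Section~5 and \cite{BG}. Two linear factors that include a consecutive pair $\{\alpha,\alpha\pm1\}$ and are simultaneously forced to be squares give, via $(2\alpha\mp1)^{2}=4\alpha(\alpha\pm1)+1$, an equation $(2\alpha\mp1)^{2}\pm\beta^{2}=1$ with no admissible solution; the same pair forced to be cubes gives $\beta_1^{3}-\beta_2^{3}=1$, again with no admissible solution; a pair involving $2\alpha-1$ forced to be cubes gives an equation of the shape $x^{3}+2y^{3}=1$, settled by Mordell's theorem. A lone quadratic factor forced to be a square gives $\alpha^{2}-\alpha+1=\beta^{2}$ or $\alpha^{2}-3\alpha+1=\beta^{2}$, which become $X^{2}-Y^{2}=3$ and $X^{2}-Y^{2}=5$ and have no admissible solution, or $2\alpha^{2}-2\alpha+1=\beta^{2}$, which becomes the Pell equation $u^{2}-2\beta^{2}=-1$ with $u=2\alpha-1$ and has infinitely many admissible $\alpha$; similarly $(\alpha-1)(2\alpha-1)=\square$ becomes $\tau^{2}-8\beta^{2}=1$ with $\tau=4\alpha-3$.

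I expect the one genuine subtlety to be the cube analysis when a quadratic factor, $\alpha^{2}-\alpha+1$, $\alpha^{2}-3\alpha+1$ or $3\alpha^{2}-3\alpha+1$, acquires an exponent $\not\equiv0\pmod 3$: one must check that in every such class two linear factors are \emph{also} forced to be cubes, producing an equation $\alpha(\alpha-1)=C$ or $x^{3}+2y^{3}=1$ with no admissible solution, so that no Mordell equation $Y^{2}=X^{3}+k$ actually has to be solved; should this companion obstruction ever be absent, one falls back on the tables of \cite{AP2} or the Elliptic Logarithm Method, as in Section~5. Carrying out this companion check, and the modular bookkeeping uniformly over all seven orders, is the main obstacle.
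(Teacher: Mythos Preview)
Your proposal is correct and follows essentially the same approach as the paper, which simply states that ``the proofs are similar to the proof of Theorem~5.1'' without further elaboration. You have in fact supplied more detail than the paper does: the reduction of each exponent modulo $2$ or $3$ on residue classes, the coprimality of the irreducible factors, and the catalogue of resulting Diophantine equations (trivial equations $(2\alpha\mp1)^2\pm\beta^2=1$, the cube difference $\beta_1^3-\beta_2^3=1$, the Thue equation $x^3+2y^3=1$, and the Pell equations) are exactly the ingredients of the proof of Theorem~5.1, applied now to the Appendix~A formulas for $H_n$.
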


\end{document}